\documentclass[eqnumis,pdf]{hjms}

\usepackage{amsfonts}
\usepackage{amsmath}
\usepackage{amssymb}
\usepackage{footnote}
\usepackage{multicol}
\usepackage{booktabs}
\usepackage[flushleft]{threeparttable}
\usepackage[font=small,labelfont=bf]{caption}
\hypersetup{
  pdftitle={Soft bitopological groups via soft elements},
  pdfauthor={S. Ray},
  pdfkeywords={Soft set, soft element, soft bitopological group, paratopological group, box topology}
}

\newcommand{\SE}{\mathrm{SE}}
\newcommand{\sse}{\subseteq_s}
\newcommand{\seq}{=_s}
\newcommand{\Sunion}{\cup_s}
\newcommand{\Sinter}{\cap_s}
\newcommand{\Pow}{\mathcal P}
\newenvironment{romanlist}{\begin{enumerate}}{\end{enumerate}}
\newenvironment{alphlist}{\begin{enumerate}}{\end{enumerate}}

\begin{document}

\markboth{S. Ray}{Soft bitopological groups via soft elements}

\title{Soft bitopological groups via soft elements}{Soft bitopological groups via soft elements}

\author{S. Ray\coraut$^{1}$}

\address{$^1$Department of Mathematics, Siksha-Bhavana (Institute of Science), Visva-Bharati, Santiniketan 731235, West Bengal, India}
\emails{subhasis.ray@visva-bharati.ac.in (S. Ray)}
\blfootnote{\, ORCID: 0000-0003-1100-4632}

\maketitle

\begin{abstract}
Let $F$ be a soft group over a parameter set $A$. We equip the selector group $\SE(F)=\prod_{t\in A}F(t)$ with the topology generated by componentwise open boxes. This construction replaces a sectionwise rule that does not define a topology on arbitrary selector subsets. It depends only on the component topologies and may therefore lose correlations between parameters. A soft bitopological group is then a soft group with two selector topologies, each making $\SE(F)$ a topological group. We also study a soft paratopological group together with its inverse topology. In this conjugate pair, inversion interchanges the two selector topologies, and equality of the two is equivalent to continuity of inversion. We prove componentwise separation criteria, finite-parameter compactness and $\omega$-boundedness results, and infinite-parameter counterexamples caused by the box topology. We also characterize when a soft union creates mixed selectors outside the two original selector sets.
\end{abstract}

\subjclass{03E72, 54A40, 54H11, 22A30}
\keywords{Soft set, soft element, soft topology, soft group, soft bitopological group, paratopological group, box topology, $\omega$-boundedness}

\section{Introduction}
Soft set theory was introduced by Molodtsov \cite{Molodtsov1999}, and its basic operations were developed by Maji--Biswas--Roy \cite{Maji2003}. Akta\c{s} and \c{C}a\u{g}man introduced soft groups \cite{Aktas2007}. Soft topologies were studied by \c{C}a\u{g}man--Karata\c{s}--Engino\u{g}lu \cite{Cagman2011} and Shabir--Naz \cite{Shabir2011}. Work on soft topological groups includes Nazmul--Samanta \cite{Nazmul2014,Nazmul2012,Nazmul2010}, Shah--Shaheen \cite{Shah2014}, and Goldar--Ray \cite{GoldarRay2022STG}.

A soft element of $F:A\to\Pow(X)$ is a choice function $a$ with $a(t)\in F(t)$ for every parameter. Thus $\SE(F)$ is an ordinary product. Ray--Goldar used this observation for soft groups \cite{RayGoldar2017}, and Goldar--Ray introduced a selector approach to soft topology \cite{GoldarRay2019,GoldarRay2017Topology}. Their sectionwise description motivated the present construction, but open coordinate projections of arbitrary selector subsets do not form a topology. We therefore use the topology generated by selector boxes. Remarks~\ref{rem:canonical_nonrep} and~\ref{rem:selector_forgets_correlations} explain the correction and its limitation.

Classical bitopology studies a set with two topologies \cite{Kelly1963}. Soft bitopological spaces were considered by Ittanagi \cite{Ittanagi2014} and Kandil et al. \cite{Kandil2017Connected,Kandil2016}. A later soft-element treatment is given in \cite{SoftBTSviaSE}. The Hausdorff clause used there requires sectionwise disjoint soft sets and is stronger than the selector-space condition. Section~\ref{sec:separation} replaces it with disjoint selector neighbourhoods and proves the corresponding componentwise criterion.

The paper treats two forms of group bitopology. In the first, a soft group carries two soft topologies, and each selector topology makes $\SE(F)$ a topological group. The elementary results on translations, inversion, separation, and homomorphisms then follow from ordinary topological-group arguments. In the second form, a soft paratopological group is paired with its inverse topology. This conjugate topology records the possible failure of inversion to be continuous; see Mar{\'i}n--Romaguera \cite{MarinRomaguera1996}, Ravsky \cite{Ravsky2002,Ravsky2001}, Arhangel'skii--Tkachenko \cite{ArhangelskiiTkachenko2008}, and Tkachenko \cite{Tkachenko2014}.

The parameter set affects the selector topology. Since $\SE(F)=\prod_{t\in A}F(t)$ carries a box topology, finite parameter sets behave like finite products. For infinite parameter sets, compactness and $\omega$-boundedness may fail even when every coordinate is compact. Soft unions also need care: they may create mixed selectors outside the union of the original selector sets.

Soft topologies also admit ordinary representations on $A\times X$ \cite{Alcantud2021,Matejdes2016,ShiPang2015}. This equivalence is background, but it is not the same as the selector construction on $\prod_{t\in A}F(t)$. The selector topology depends only on the component topologies and can forget correlations between parameters. The main points of the paper are the corrected box construction and selector-union theorem, the conjugate topology and its automatic-continuity criteria, and the finite/infinite parameter difference for box compactness.

\section{Preliminaries}

Throughout, $A$ denotes a fixed nonempty set of parameters and $X$ denotes a nonempty universe. For a set $Y$, $\Pow(Y)$ denotes the power set of $Y$.

\subsection*{Soft sets and soft elements}
\begin{definition}[Soft set and soft element]\label{def:softset}
A \emph{soft set} over $X$ (with parameter set $A$) is a mapping $F:A\to \Pow(X)$. We assume $F(t)\neq \varnothing$ for all $t\in A$ when we speak about soft elements.

A mapping $a:A\to X$ is a \emph{soft element} of $F$ if $a(t)\in F(t)$ for all $t\in A$. In this case we write $a\in_s F$. The set of all soft elements of $F$ is denoted by
\begin{equation}
\SE(F):=\{a:A\to X:\ a(t)\in F(t)\ \forall t\in A\}.
\end{equation}
Two soft elements $a,b\in \SE(F)$ are \emph{soft equal}, written $a\seq b$, if $a(t)=b(t)$ for all $t\in A$.
\end{definition}

\begin{remark}\label{rem:product_view}
As sets one has the identification $\SE(F)=\prod_{t\in A}F(t)$. If $A$ is infinite, the axiom of choice is typically used to guarantee $\SE(F)\neq\varnothing$ whenever each $F(t)\neq\varnothing$.
\end{remark}

\begin{definition}[Soft subset, union and intersection]\label{def:soft_ops}
Let $F,H:A\to\Pow(X)$ be soft sets. We say $H$ is a \emph{soft subset} of $F$, written $H\sse F$, if $H(t)\subseteq F(t)$ for all $t\in A$. 

The \emph{soft union} and \emph{soft intersection} are defined parameterwise by
\begin{equation}
(F\Sunion H)(t):=F(t)\cup H(t),\qquad (F\Sinter H)(t):=F(t)\cap H(t)\qquad (t\in A).
\end{equation}
The \emph{empty soft set} $\Phi$ is given by $\Phi(t)=\varnothing$ for all $t\in A$.
\end{definition}

\begin{remark}\label{rem:SE_union_intersection}
In general one only has
\begin{equation}
\SE(F)\cup \SE(H)\subseteq \SE(F\Sunion H),
\end{equation}
and the inclusion may be strict because a soft element of $F\Sunion H$ may choose values from $F(t)$ for some parameters and from $H(t)$ for other parameters \cite{GoldarRay2019}. 
For intersections one always has
\begin{equation}
\SE(F\Sinter H)=\SE(F)\cap \SE(H),
\end{equation}
as observed by Goldar--Ray \cite{GoldarRay2019}.
\end{remark}

\begin{theorem}\label{thm:selector_union_exact}
Let $F,H:A\to\Pow(X)$ be soft sets with nonempty sections. Put
\begin{equation}
I_{H\setminus F}:=\{t\in A: H(t)\setminus F(t)\neq\varnothing\},\qquad
I_{F\setminus H}:=\{t\in A: F(t)\setminus H(t)\neq\varnothing\}.
\end{equation}
Then
\begin{equation}
\SE(F)\cup \SE(H)\subseteq \SE(F\Sunion H)=\prod_{t\in A}(F(t)\cup H(t)).
\end{equation}
Moreover, the inclusion is strict if and only if there exist distinct parameters
$s,t\in A$ such that $H(s)\setminus F(s)\neq\varnothing$ and $F(t)\setminus H(t)\neq\varnothing$.
Equivalently, equality holds in exactly the following cases:
\begin{romanlist}
\item $H(t)\subseteq F(t)$ for every $t\in A$;
\item $F(t)\subseteq H(t)$ for every $t\in A$;
\item all strict differences in the two directions occur at one and the same parameter.
\end{romanlist}
In particular, if two opposite strict differences occur at two different parameters, then $\SE(F\Sunion H)$ contains mixed selectors not lying in $\SE(F)\cup\SE(H)$.
\end{theorem}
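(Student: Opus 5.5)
The plan is to first dispose of the inclusion and the product identification directly from the definitions. By Definition~\ref{def:soft_ops}, $(F\Sunion H)(t)=F(t)\cup H(t)$. Definition~\ref{def:softset} (as in Remark~\ref{rem:product_view}) then gives $\SE(F\Sunion H)=\prod_{t\in A}(F(t)\cup H(t))$. Any $a\in\SE(F)$ satisfies $a(t)\in F(t)\subseteq F(t)\cup H(t)$ for all $t$, and likewise for $H$, so $\SE(F)\cup\SE(H)\subseteq\SE(F\Sunion H)$.

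For the strictness criterion I will prove both directions separately.

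\emph{Sufficiency.} Suppose $s\neq t$ with $h\in H(s)\setminus F(s)$ and $f\in F(t)\setminus H(t)$. Using nonemptiness of the sections and choice (Remark~\ref{rem:product_view}), fix any $c\in\SE(F\Sunion H)$. Define $a$ by $a(s)=h$, $a(t)=f$, and $a(r)=c(r)$ otherwise; this is well defined because $s\neq t$. Then $a\in\SE(F\Sunion H)$. Since $a(s)\notin F(s)$, we have $a\notin\SE(F)$, and since $a(t)\notin H(t)$, we have $a\notin\SE(H)$. So $a$ is the promised mixed selector, which also proves the final ``in particular'' sentence.

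\emph{Necessity.} Assume no such distinct pair exists; I will show every $a\in\SE(F\Sunion H)$ lies in $\SE(F)\cup\SE(H)$. Let $S=\{r: a(r)\notin F(r)\}\subseteq I_{H\setminus F}$ and $T=\{r: a(r)\notin H(r)\}\subseteq I_{F\setminus H}$. Because $a(r)\in F(r)\cup H(r)$, the sets $S$ and $T$ are disjoint. If $S=\varnothing$ then $a\in\SE(F)$, and if $T=\varnothing$ then $a\in\SE(H)$. Otherwise pick $s\in S$ and $t\in T$. Disjointness gives $s\neq t$, and $s\in I_{H\setminus F}$, $t\in I_{F\setminus H}$, contradicting the hypothesis.

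Finally, I will translate the negated condition into the three cases. ``No distinct $s\in I_{H\setminus F}$, $t\in I_{F\setminus H}$'' means one of three things. Either $I_{H\setminus F}=\varnothing$, which is case (i). Or $I_{F\setminus H}=\varnothing$, which is case (ii). Or both sets are nonempty and every element of one equals every element of the other, which forces $I_{H\setminus F}=I_{F\setminus H}=\{t_0\}$; this is case (iii). Conversely, each of the three cases plainly excludes a distinct pair.

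The only real subtlety is the necessity direction: one must observe that a single selector records its ``non-$F$'' and ``non-$H$'' parameters in disjoint sets. Everything else is bookkeeping. The choice of the background selector $c$ in the sufficiency step needs the axiom of choice when $A$ is infinite, as noted in Remark~\ref{rem:product_view}.
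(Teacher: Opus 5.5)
Your proof is correct and follows essentially the same route as the paper. Sufficiency uses the same mixed-selector construction at two distinct parameters, and your disjoint sets $S$ and $T$ repackage the paper's observation that a selector of $F\Sunion H$ cannot lie outside both $F(r)$ and $H(r)$ at the same $r$. You also go slightly further than the paper by explicitly reducing the negated condition to $I_{H\setminus F}=\varnothing$, $I_{F\setminus H}=\varnothing$, or $I_{H\setminus F}=I_{F\setminus H}=\{t_0\}$, which the paper dismisses as ``just the negation''.
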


\begin{proof}
The displayed inclusion is immediate from the product representation. If $s\neq t$, choose $x_s\in H(s)\setminus F(s)$ and $x_t\in F(t)\setminus H(t)$, and choose values from $F(r)\cup H(r)$ at all other parameters. The resulting selector belongs to $\SE(F\Sunion H)$, but it is not in $\SE(F)$ because of the coordinate $s$, and it is not in $\SE(H)$ because of the coordinate $t$.

Conversely, if a selector $a\in\SE(F\Sunion H)$ is not in $\SE(F)\cup\SE(H)$, then $a(s)\notin F(s)$ for some $s$ and $a(t)\notin H(t)$ for some $t$. Since $a(s)\in F(s)\cup H(s)$ and $a(t)\in F(t)\cup H(t)$, this gives $a(s)\in H(s)\setminus F(s)$ and $a(t)\in F(t)\setminus H(t)$. If $s=t$, then $a(s)$ would lie in both $H(s)\setminus F(s)$ and $F(s)\setminus H(s)$, impossible. Thus $s\neq t$. The final characterization is just the negation of this condition.
\end{proof}

\subsection*{Soft groups via soft elements}
Let $(G,\cdot)$ be a group with identity element $e$.

\begin{definition}[Soft group]\label{def:soft_group}
A soft set $F:A\to \Pow(G)$ is called a \emph{soft group over $G$} if each $F(t)$ is a subgroup of $G$ (Akta\c{s}--\c{C}a\u{g}man \cite{Aktas2007}). Equivalently (classical viewpoint), $\SE(F)$ is closed under pointwise multiplication and inversion (Ray--Goldar \cite{RayGoldar2017}):
\begin{equation}
(a\ast b)(t):=a(t)\cdot b(t),\qquad a^{-1}(t):=a(t)^{-1}\qquad (t\in A),
\end{equation}
for all $a,b\in\SE(F)$. In this case $(\SE(F),\ast)$ is a group, called the \emph{soft-element group} of $F$.
\end{definition}

\begin{remark}\label{rem:soft_identity}
The identity in $(\SE(F),\ast)$ is the constant soft element $e_F$ defined by $e_F(t)=e$ for all $t\in A$.
\end{remark}

\begin{example}\label{ex:Z_2_3}
Let $G=(\mathbb Z,+)$ and $A=\{t_1,t_2\}$. Define $F(t_1)=2\mathbb Z$ and $F(t_2)=3\mathbb Z$. Then $F$ is a soft group and $\SE(F)=2\mathbb Z\times 3\mathbb Z$ with coordinatewise addition.
\end{example}

\subsection*{Soft topologies and selector topologies}
\begin{definition}[Soft topology]\label{def:soft_topology}
Let $F:A\to\Pow(X)$ be a soft set with nonempty sections. A family $\tau$ of soft subsets of $F$ is a \emph{soft topology} on $F$ if:
\begin{alphlist}
\item $\Phi\in \tau$ and $F\in \tau$;
\item $\tau$ is closed under arbitrary soft unions;
\item $\tau$ is closed under finite soft intersections.
\end{alphlist}
Then $(F,\tau)$ is a \emph{soft topological space}; members of $\tau$ are \emph{soft open} sets \cite{Cagman2011,Shabir2011}. For each $t\in A$, the family
\begin{equation}
\tau_t:=\{H(t):H\in\tau\}
\end{equation}
is a topology on $F(t)$, called the \emph{component topology}.
\end{definition}

\begin{definition}[Sections and selector boxes]\label{def:section_of_subset}
Let $T\subseteq \SE(F)$. Its section at $t\in A$ is
\begin{equation}
T(t):=\{a(t):a\in T\}\subseteq F(t).
\end{equation}
If $U_t\subseteq F(t)$ for every $t\in A$, write
\begin{equation}
\Box_{t\in A}U_t:=\prod_{t\in A}U_t\subseteq\SE(F).
\end{equation}
This set is a \emph{selector box}. For a soft subset $H\sse F$,
\begin{equation}
\SE(H)=\Box_{t\in A}H(t).
\end{equation}
\end{definition}

\begin{definition}[Selector box and product topologies]\label{def:box_tych}
Let $(F,\tau)$ be a soft topological space. The boxes
\begin{equation}
\mathcal B_\tau:=\left\{\Box_{t\in A}U_t:U_t\in\tau_t\text{ for every }t\in A\right\}
\end{equation}
form a base on $\SE(F)$. The topology generated by this base is denoted by $\tau^\ast=\tau^\Box$ and is called the \emph{selector box topology}. We write $\tau^\Pi$ for the Tychonoff product topology, whose basic boxes satisfy $U_t=F(t)$ for all but finitely many $t$.
\end{definition}

\begin{proposition}\label{thm:induced_topology}
Let $(F,\tau)$ be a soft topological space. Then
\begin{equation}
\tau^\Pi\subseteq\tau^\ast,
\end{equation}
and equality holds when $A$ is finite. If
\begin{equation}
\tau_{\mathrm{can}}:=\{H\sse F:H(t)\in\tau_t\text{ for every }t\in A\}
\end{equation}
is the canonical componentwise enlargement of $\tau$, then
\begin{equation}
\{\SE(H):H\in\tau_{\mathrm{can}}\}=\mathcal B_\tau.
\end{equation}
\end{proposition}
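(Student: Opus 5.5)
The proof splits into three independent checks: the inclusion $\tau^\Pi\subseteq\tau^\ast$, equality for finite $A$, and the identification of $\{\SE(H):H\in\tau_{\mathrm{can}}\}$ with $\mathcal B_\tau$. Throughout I will use the product identification $\SE(F)=\prod_{t\in A}F(t)$ from Remark~\ref{rem:product_view}, together with the formula $\SE(H)=\Box_{t\in A}H(t)$ from Definition~\ref{def:section_of_subset}.

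Before anything else I would confirm that $\mathcal B_\tau$ really is a base, since Definition~\ref{def:box_tych} asserts this without proof. First, each $\tau_t$ is a topology on $F(t)$ (Definition~\ref{def:soft_topology}), so $F(t)\in\tau_t$ and hence $\SE(F)=\Box_t F(t)\in\mathcal B_\tau$; thus $\mathcal B_\tau$ covers $\SE(F)$. Second, the intersection of two boxes is computed coordinatewise:
\begin{equation}
\Box_t U_t\cap\Box_t V_t=\Box_t(U_t\cap V_t).
\end{equation}
Each $U_t\cap V_t$ lies in $\tau_t$, so $\mathcal B_\tau$ is closed under finite intersections and is a base.

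For the inclusion, take a Tychonoff basic box $\Box_t U_t$ with $U_t\in\tau_t$ and $U_t=F(t)$ for all but finitely many $t$. Since $F(t)\in\tau_t$, this box already belongs to $\mathcal B_\tau$. So the Tychonoff base is contained in $\mathcal B_\tau$, and the generated topologies satisfy $\tau^\Pi\subseteq\tau^\ast$.

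When $A$ is finite, every box in $\mathcal B_\tau$ trivially has $U_t=F(t)$ outside a finite set. The two bases then coincide, and so do the topologies.

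For the last claim I argue both inclusions. If $H\in\tau_{\mathrm{can}}$, then $H(t)\in\tau_t$ for each $t$, so $\SE(H)=\Box_t H(t)\in\mathcal B_\tau$. Conversely, given $\Box_t U_t$ with $U_t\in\tau_t$, define the soft set $H(t):=U_t$. Then $H(t)\subseteq F(t)$, so $H\sse F$, and $H(t)\in\tau_t$ for every $t$, so $H\in\tau_{\mathrm{can}}$. Finally $\SE(H)=\Box_t U_t$, which gives the reverse inclusion.

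The only point needing care is the empty case. A box with some $U_t=\varnothing$ is the empty selector set. The corresponding $H$ is allowed in $\tau_{\mathrm{can}}$, because $\varnothing\in\tau_t$ and $H$ need not have nonempty sections. Under the formula $\SE(H)=\prod_t H(t)$, such an $H$ gives $\SE(H)=\varnothing$, so the correspondence still holds. I do not expect any genuine obstacle; the argument amounts to unwinding the definitions.
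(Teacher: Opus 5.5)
Your proposal is correct and follows the same route as the paper. Like the paper, you observe that the Tychonoff base is the subfamily of $\mathcal B_\tau$ with finitely many nontrivial coordinates, that the two bases coincide for finite $A$, and that $\SE(H)=\prod_t H(t)$ gives the last identity. You also spell out two points the paper leaves implicit: that $\mathcal B_\tau$ is a base, and how the empty-box case is handled.
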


\begin{proof}
The product topology uses only boxes with finitely many nontrivial coordinates, whereas $\tau^\ast$ uses all boxes. The two bases agree when $A$ is finite. The last equality follows from $\SE(H)=\prod_tH(t)$.
\end{proof}

\begin{remark}\label{rem:canonical_nonrep}
Goldar and Ray introduced the selector viewpoint for soft topology \cite{GoldarRay2017Topology}. Their sectionwise rule needs correction for arbitrary subsets of a product. In general,
\begin{equation}
(T_1\cap T_2)(t)\neq T_1(t)\cap T_2(t).
\end{equation}
For example, let $A=\{1,2\}$ and
\begin{equation}
T_1=\{(0,0),(1,1)\},\qquad T_2=\{(0,0),(1,2)\}.
\end{equation}
The first section of $T_1\cap T_2$ is $\{0\}$, while the intersection of the first sections is $\{0,1\}$. Thus open sections of an arbitrary selector subset do not define a topology. The box base in Definition~\ref{def:box_tych} avoids this problem.
\end{remark}

\begin{remark}\label{rem:selector_forgets_correlations}
The map $\tau\mapsto\tau^\ast$ depends only on the component topologies. It need not be injective, and
\begin{equation}
\tau^\ast=(\tau_{\mathrm{can}})^\ast.
\end{equation}
For a simple example, let $A=\{1,2\}$, let $F(1)=F(2)=\{0,1\}$, and define $H_i(t)=\{i\}$ for $i=0,1$ and both parameters. Then
\begin{equation}
\tau=\{\Phi,F,H_0,H_1\}
\end{equation}
is a soft topology whose two component topologies are discrete. Hence $\tau^\ast$ is the discrete topology on $\{0,1\}^2$, although $\tau$ itself has only two nontrivial soft open sets. In particular, the selectors $(0,1)$ and $(1,0)$ are not separated by a proper member of $\tau$. The selector topology therefore records the component topologies but not their original correlation.
\end{remark}

\subsection*{Soft mappings and soft continuity}
\begin{definition}[Soft mapping]\label{def:soft_mapping}
Let $F$ and $H$ be soft sets. In this paper, a \emph{soft mapping} from $F$ to $H$ is a function
\begin{equation}
f:\SE(F)\longrightarrow\SE(H),
\end{equation}
and we write $f:F\to_s H$. If $W\sse F$ and $V\sse H$, then $f[\SE(W)]\subseteq\SE(H)$ and $f^{-1}[\SE(V)]\subseteq\SE(F)$ are ordinary selector subsets; they are not assumed to be soft sets.
\end{definition}

\begin{definition}[Soft continuity]\label{def:soft_cont}
Let $(F,\tau)$ and $(H,\sigma)$ be soft topological spaces. A soft mapping $f:F\to_s H$ is \emph{soft continuous} if
\begin{equation}
f:(\SE(F),\tau^\ast)\longrightarrow(\SE(H),\sigma^\ast)
\end{equation}
is continuous. It is enough to test inverse images of the basic selector boxes $\SE(V)$ with $V\in\sigma_{\mathrm{can}}$.
\end{definition}

\begin{remark}
Goldar--Ray \cite{GoldarRay2017Topology} introduced continuity through soft elements. Definition~\ref{def:soft_cont} uses the corrected box topology and does not identify an arbitrary selector subset with a soft set.
\end{remark}

\subsection*{Soft topological groups and induced bitopologies}
We adapt the selector-based definition of Goldar--Ray \cite{GoldarRay2022STG} to the corrected box topology.

\begin{definition}[Soft topological group]\label{def:soft_top_group}
Let $F$ be a soft group over a group $G$, and let $\tau$ be a soft topology on $F$. The pair $(F,\tau)$ is called a \emph{soft topological group} if the induced topological group $(\SE(F),\ast,\tau^\ast)$ is a (classical) topological group.
\end{definition}

\begin{theorem} \label{thm:criterion_xyinv}
With this topology, the usual one-map criterion takes the following form \cite{GoldarRay2022STG}.
Let $F$ be a soft group and $\tau$ a soft topology on $F$. Then $(F,\tau)$ is a soft topological group if and only if the mapping
\begin{equation}
\Delta:\SE(F)\times \SE(F)\to \SE(F),\qquad \Delta(a,b)=a\ast b^{-1},
\end{equation}
is continuous when $\SE(F)\times \SE(F)$ carries the product topology $\tau^\ast\times \tau^\ast$.
\end{theorem}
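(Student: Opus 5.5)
This is the classical one-map criterion for topological groups, applied to the ordinary group $(\SE(F),\ast)$ with the topology $\tau^\ast$. By Definition~\ref{def:soft_top_group}, $(F,\tau)$ is a soft topological group exactly when multiplication $m(a,b)=a\ast b$ is continuous from $(\SE(F)\times\SE(F),\tau^\ast\times\tau^\ast)$ to $(\SE(F),\tau^\ast)$, and inversion $\iota(a)=a^{-1}$ is continuous on $(\SE(F),\tau^\ast)$. No soft-set structure enters beyond this translation, so the plan is to prove the two implications with ordinary point-set arguments.

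For the forward direction, assume $m$ and $\iota$ are continuous. Write $\Delta=m\circ(\mathrm{id}\times\iota)$. The map $\mathrm{id}\times\iota:(a,b)\mapsto(a,b^{-1})$ is continuous for the product topology $\tau^\ast\times\tau^\ast$, because each of its coordinate maps is continuous. Hence $\Delta$ is a composition of continuous maps and is continuous.

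For the converse, assume $\Delta$ is continuous. Let $j:\SE(F)\to\SE(F)\times\SE(F)$ be $j(a)=(e_F,a)$, where $e_F$ is the identity of Remark~\ref{rem:soft_identity}. Its coordinates are a constant map and the identity, so $j$ is continuous into the product topology. Therefore $\iota=\Delta\circ j$ is continuous, since $\Delta(e_F,a)=e_F\ast a^{-1}=a^{-1}$. Next, $a\ast b=a\ast(b^{-1})^{-1}=\Delta(a,\iota(b))$, so $m=\Delta\circ(\mathrm{id}\times\iota)$. We have just shown $\iota$ is continuous, so $\mathrm{id}\times\iota$ is continuous and $m$ is continuous. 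Thus $(\SE(F),\ast,\tau^\ast)$ is a topological group.

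The only point needing care is that the topology on $\SE(F)\times\SE(F)$ is the ordinary product of two copies of $\tau^\ast$, not a box topology over the parameter set $A\sqcup A$. With this reading, the continuity facts used above (maps into a product are continuous iff their coordinates are, and $f\times g$ is continuous when $f$ and $g$ are) are the standard ones. The box nature of $\tau^\ast$ from Definition~\ref{def:box_tych} plays no role, since only the abstract space $(\SE(F),\tau^\ast)$ is used.
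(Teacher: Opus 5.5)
Your proof is correct and follows the paper's argument essentially verbatim: you write $\Delta=m\circ(\mathrm{id}\times\iota)$ for the forward direction, and for the converse you use $\iota(b)=\Delta(e_F,b)$ and then $m(a,b)=\Delta(a,\iota(b))$. Your closing remark is also consistent with the paper: only the abstract space $(\SE(F),\tau^\ast)$ is used, and the box structure of $\tau^\ast$ plays no role here.
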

\begin{proof}
Assume first that $(F,\tau)$ is a soft topological group. Then $(\SE(F),\tau^\ast)$ is a
topological group. In particular, inversion $\iota(b)=b^{-1}$ and multiplication
$m(a,b)=a\ast b$ are continuous. Hence
\begin{equation}
\Delta(a,b)=a\ast b^{-1}=m\bigl(a,\iota(b)\bigr)
\end{equation}
is a composition of continuous maps, so $\Delta$ is continuous.

Conversely, suppose $\Delta:\SE(F)\times\SE(F)\to \SE(F)$ defined by
$\Delta(a,b)=a\ast b^{-1}$ is continuous. Let $e_F$ be the identity of $\SE(F)$.
Define inversion by
\begin{equation}
\iota(b)=b^{-1}=\Delta(e_F,b).
\end{equation}
Since the map $b\mapsto(e_F,b)$ is continuous and $\Delta$ is continuous, $\iota$ is
continuous. Now define multiplication by
\begin{equation}
m(a,b)=a\ast b=\Delta\bigl(a,\iota(b)\bigr).
\end{equation}
The map $(a,b)\mapsto (a,\iota(b))$ is continuous, hence $m$ is continuous as a composition.
Therefore $(\SE(F),\tau^\ast)$ is a topological group, so $(F,\tau)$ is a soft topological group.
\end{proof}

\begin{remark}\label{rem:translations_soft_homeo}
If $(F,\tau)$ is a soft topological group, then left and right translations and inner automorphisms are homeomorphisms of $(\SE(F),\tau^\ast)$.
\end{remark}

\begin{definition}[Soft bitopological space]\label{def:soft_bitop_space}
A \emph{soft bitopological space} is a triple $(F,\tau_1,\tau_2)$ where $F$ is a soft set (with nonempty sections) and $\tau_1,\tau_2$ are soft topologies on $F$.
\end{definition}

\begin{proposition}\label{thm:induced_bitopology}
Let $(F,\tau_1,\tau_2)$ be a soft bitopological space. Then
$(\SE(F),(\tau_1)^\ast,(\tau_2)^\ast)$ is a classical bitopological space.
\end{proposition}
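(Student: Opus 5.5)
The plan is to reduce the claim to the fact, stated in Definition~\ref{def:box_tych}, that for each soft topology $\tau$ on $F$ the boxes $\mathcal B_\tau$ form a base on $\SE(F)$. A classical bitopological space is a set with two topologies, and $\SE(F)$ is the same set for $\tau_1$ and $\tau_2$. So it suffices to check, for $i=1,2$, that $\mathcal B_{\tau_i}$ satisfies the base axioms; the generated topology $(\tau_i)^\ast$ is then a topology on $\SE(F)$.

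The first step is the covering axiom. By Definition~\ref{def:soft_topology}, $F\in\tau_i$, so $F(t)\in(\tau_i)_t$ for every $t$. Hence $\Box_{t\in A}F(t)=\SE(F)$ belongs to $\mathcal B_{\tau_i}$. Since $F$ has nonempty sections, $\SE(F)$ is nonempty by Remark~\ref{rem:product_view}, although nonemptiness is not actually needed here.

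The second step is the intersection axiom. For boxes $\Box_tU_t$ and $\Box_tV_t$ in $\mathcal B_{\tau_i}$, the intersection of products is the product of the coordinatewise intersections:
\[
\Box_tU_t\cap\Box_tV_t=\Box_t(U_t\cap V_t).
\]
This identity holds for boxes, in contrast with the arbitrary selector subsets of Remark~\ref{rem:canonical_nonrep}. It remains to see that $U_t\cap V_t\in(\tau_i)_t$, that is, that each component family $(\tau_i)_t$ is closed under finite intersections.

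The third step checks that $(\tau_i)_t$ is a topology on $F(t)$, which the definition asserts without proof. I would verify it directly. Suppose $U_t=H(t)$ and $V_t=K(t)$ with $H,K\in\tau_i$. Then $H\Sinter K\in\tau_i$ and $(H\Sinter K)(t)=H(t)\cap K(t)$. For unions, $(\bigcup_j H_j)(t)=\bigcup_j H_j(t)$ by the parameterwise definition of soft union. The sets $\varnothing$ and $F(t)$ come from $\Phi$ and $F$. So the intersection of two basic boxes is again a basic box, and $\mathcal B_{\tau_i}$ is a base.

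No real obstacle is expected. The one point to handle carefully is that the base is indexed coordinatewise: one must use the box identity above rather than sections of arbitrary selector subsets.
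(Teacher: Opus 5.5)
Your argument is correct and matches the paper. The paper states this proposition without proof, as an immediate consequence of Definition~\ref{def:box_tych}: each $(\tau_i)^\ast$ is a topology on the common set $\SE(F)$. Your check of the base axioms (covering via $F\in\tau_i$, and $\Box_tU_t\cap\Box_tV_t=\Box_t(U_t\cap V_t)$ with each $(\tau_i)_t$ closed under finite intersections) fills in the detail the paper leaves implicit. Any family of sets generates a topology, so the proposition would hold without it; what your check adds is that the open sets of $(\tau_i)^\ast$ are exactly unions of boxes.
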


\section{Soft Bitopological Groups}

\begin{definition}[Soft bitopological group]\label{def:soft_bitop_group}
Let $F$ be a soft group over a group $G$, and let $\tau_1,\tau_2$ be soft topologies on $F$. The triple $(F,\tau_1,\tau_2)$ is called a \emph{soft bitopological group} if 
\begin{equation}
(\SE(F),\ast,(\tau_1)^\ast)\quad\text{and}\quad (\SE(F),\ast,(\tau_2)^\ast)
\end{equation}
are (classical) topological groups. Equivalently, the induced bitopological space
\begin{equation}
(\SE(F),(\tau_1)^\ast,(\tau_2)^\ast)
\end{equation}
is a \emph{bitopological group} in the independent-topologies sense.
\end{definition}

\begin{remark}\label{rem:independent_vs_conjugate}
Definition~\ref{def:soft_bitop_group} treats the two topologies as independent. This is useful for transfer results, but it does not say why the two topologies should be related. Section~\ref{sec:conjugate} treats the conjugate case $(\tau,\tau^{-1})$, which comes from a soft paratopological group and shows when inversion fails to be continuous.
\end{remark}

\begin{theorem}\label{thm:soft_bitop_char}
Let $F$ be a soft group and $\tau_1,\tau_2$ soft topologies on $F$. The following are equivalent:
\begin{romanlist}
\item $(F,\tau_1,\tau_2)$ is a soft bitopological group.
\item For each $i\in\{1,2\}$, the map $\Delta_i:\SE(F)\times\SE(F)\to \SE(F)$ defined by $\Delta_i(a,b)=a\ast b^{-1}$ is continuous when $\SE(F)\times\SE(F)$ carries $(\tau_i)^\ast\times (\tau_i)^\ast$ and $\SE(F)$ carries $(\tau_i)^\ast$.
\item For each $t\in A$, the subgroup $F(t)$ equipped with the two component topologies $(\tau_1)_t$ and $(\tau_2)_t$ is a (classical) bitopological group.
\end{romanlist}
\end{theorem}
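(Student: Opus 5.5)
The plan is to prove (i)$\Leftrightarrow$(ii) directly from Theorem~\ref{thm:criterion_xyinv}, and then (i)$\Leftrightarrow$(iii) coordinate by coordinate, using the fact that $(\tau_i)^\ast$ is the box topology on $\prod_{t\in A}F(t)$ built from the component topologies $(\tau_i)_t$. I read (iii) in the independent-topologies sense of Definition~\ref{def:soft_bitop_group}: for each $i$ and each $t$, the pair $(F(t),(\tau_i)_t)$ is a topological group. Because the two topologies never interact, it suffices to fix $i$ and write $\sigma=\tau_i$.

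\textbf{(i)$\Leftrightarrow$(ii).} By Definition~\ref{def:soft_bitop_group}, (i) says that $(F,\tau_1)$ and $(F,\tau_2)$ are both soft topological groups. Applying Theorem~\ref{thm:criterion_xyinv} separately to $\tau_1$ and $\tau_2$ turns each of these statements into continuity of $\Delta_i$. This gives (ii).

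\textbf{(iii)$\Rightarrow$(i).} Assume each $(F(t),\sigma_t)$ is a topological group. I will check continuity of $\Delta(a,b)=a\ast b^{-1}$ for $\sigma^\ast\times\sigma^\ast$ and then invoke Theorem~\ref{thm:criterion_xyinv}.
\begin{enumerate}
\item Fix $(a,b)$ and a basic box $W=\Box_{t}W_t$ with $W_t\in\sigma_t$ and $a\ast b^{-1}\in W$.
\item For each $t$, continuity of $(x,y)\mapsto xy^{-1}$ on $F(t)$ gives open sets $U_t\ni a(t)$ and $V_t\ni b(t)$ with $U_tV_t^{-1}\subseteq W_t$.
\item Put $U=\Box_tU_t$ and $V=\Box_tV_t$. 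These are basic open sets of $\sigma^\ast$ by Definition~\ref{def:box_tych}, since the box topology allows all coordinates to be nontrivial.
\item Then $\Delta[U\times V]\subseteq W$ coordinatewise, which proves continuity at $(a,b)$.
\end{enumerate}
The only subtlety is that infinitely many choices $U_t,V_t$ are made at once. The axiom of choice, already assumed in Remark~\ref{rem:product_view}, covers this.

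\textbf{(i)$\Rightarrow$(iii).} This is the step I expect to need the most care, because the box topology is finer than the product topology. I will realise each $F(t)$ as a subspace via a slice embedding.
\begin{enumerate}
\item Fix $t$ and define $j_t:F(t)\to\SE(F)$ by $j_t(x)(t)=x$ and $j_t(x)(s)=e$ for $s\neq t$. This is an injective group homomorphism onto the subgroup $S_t=\{a: a(s)=e \ \forall s\neq t\}$.
\item $j_t$ is continuous. The preimage of a box $\Box_sU_s$ is $U_t$ if $e\in U_s$ for all $s\neq t$, and is empty otherwise. In either case it is open.
\item $j_t$ is open onto $S_t$, because $j_t[U]=S_t\cap\Box_sU'_s$ with $U'_t=U$ and $U'_s=F(s)$ for $s\neq t$.
\item Hence $(F(t),\sigma_t)$ is topologically isomorphic to the subgroup $S_t$ of the topological group $(\SE(F),\sigma^\ast)$. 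A subgroup of a topological group is a topological group, so $(F(t),\sigma_t)$ is one.
\end{enumerate}
As a cross-check, the projection $\pi_t$ is continuous and $\pi_t\circ j_t=\mathrm{id}$. Doing this for $i=1,2$ gives (iii).
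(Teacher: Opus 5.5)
Your proposal is correct and follows essentially the paper's route. You get (i)$\Leftrightarrow$(ii) from Theorem~\ref{thm:criterion_xyinv}. You get (iii)$\Rightarrow$(i) by coordinatewise control of $\Delta$ on selector boxes; your pointwise choice of $U_t,V_t$ makes explicit the identification of $(\tau_i)^\ast\times(\tau_i)^\ast$ with the box topology on $\prod_t(F(t)\times F(t))$, which the paper only asserts. You get (i)$\Rightarrow$(iii) via a coordinate insertion $j_t$, as the paper does. The only cosmetic difference is in that last step: the paper inserts through an arbitrary base selector $c$ and writes $xy^{-1}=\pi_t\bigl(j_t(x)\ast j_t(y)^{-1}\bigr)$, which needs only continuity of $j_t$ and $\pi_t$. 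You instead use the identity slice to embed $F(t)$ as the topological subgroup $S_t$.
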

\begin{proof}
\emph{(i)$\Leftrightarrow$(ii)} follows by applying Theorem~\ref{thm:criterion_xyinv} separately to $(F,\tau_1)$ and $(F,\tau_2)$.

\emph{(i)$\Rightarrow$(iii).} Fix $t\in A$ and choose a base selector $c\in\SE(F)$. The coordinate projection $\pi_t:\SE(F)\to F(t)$ is continuous for each box-induced topology. Also the coordinate insertion
\begin{equation}
j_t:F(t)\to\SE(F),\qquad
j_t(x)(s)=\begin{cases}
 x,&s=t,\\
 c(s),&s\neq t,
\end{cases}
\end{equation}
is continuous: the inverse image of a box is either empty or an open set in the $t$-coordinate. Therefore the component map $(x,y)\mapsto xy^{-1}$ is
\begin{equation}
F(t)\times F(t)\xrightarrow{j_t\times j_t}\SE(F)\times\SE(F)\xrightarrow{(a,b)\mapsto a\ast b^{-1}}\SE(F)\xrightarrow{\pi_t}F(t),
\end{equation}
and is continuous in each component topology. Hence each $(F(t),(\tau_1)_t,(\tau_2)_t)$ is a classical bitopological group.

\emph{(iii)$\Rightarrow$(i).} Assume each $(F(t),(\tau_1)_t)$ and $(F(t),(\tau_2)_t)$ is a topological group. Fix $i\in\{1,2\}$. It is enough to test continuity of $\Delta(a,b)=a\ast b^{-1}$ on basic boxes in $(\tau_i)^\ast$. Let $B=\Box_{t\in A}U_t$ with $U_t\in(\tau_i)_t$. Then
\begin{equation}
\Delta^{-1}(B)=\left\{(a,b): a(t)b(t)^{-1}\in U_t\ \forall t\in A\right\}
=\Box_{t\in A}\Delta_t^{-1}(U_t),
\end{equation}
where $\Delta_t(x,y)=xy^{-1}$. Each $\Delta_t^{-1}(U_t)$ is open in $F(t)\times F(t)$. Under the natural identification
$\SE(F)\times\SE(F)\cong\prod_{t\in A}(F(t)\times F(t))$, the product topology $(\tau_i)^\ast\times(\tau_i)^\ast$ is the corresponding box topology. Hence $\Delta^{-1}(B)$ is open. Thus $\Delta$ is continuous in $(\tau_i)^\ast$; by Theorem~\ref{thm:criterion_xyinv}, $(F,\tau_i)$ is a soft topological group. Since $i$ was arbitrary, $(F,\tau_1,\tau_2)$ is a soft bitopological group.
\end{proof}

Let $(F,\tau_1,\tau_2)$ be a soft bitopological group. For $a\in \SE(F)$ define left and right translations on $\SE(F)$:
\begin{equation}
L_a(x)=a\ast x,\qquad R_a(x)=x\ast a.
\end{equation}
\begin{proposition}\label{prop:translations}
For each $i\in\{1,2\}$ and each $a\in \SE(F)$, the maps $L_a$ and $R_a$ are homeomorphisms of $(\SE(F),(\tau_i)^\ast)$. Inversion $x\mapsto x^{-1}$ is also a homeomorphism in each $(\tau_i)^\ast$.
\end{proposition}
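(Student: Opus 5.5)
The plan is to reduce everything to the fact, given by Definition~\ref{def:soft_bitop_group}, that for each fixed $i\in\{1,2\}$ the triple $(\SE(F),\ast,(\tau_i)^\ast)$ is a classical topological group. We then run the standard topological-group argument separately for $i=1$ and $i=2$. No interaction between the two topologies is needed, so it suffices to fix $i$ and work in the single topology $(\tau_i)^\ast$.

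\emph{Step 1: left translations.} Fix $a\in\SE(F)$. The map $L_a$ is the composite
\begin{equation}
\SE(F)\xrightarrow{\;x\mapsto(a,x)\;}\SE(F)\times\SE(F)\xrightarrow{\;m\;}\SE(F).
\end{equation}
The first map is continuous into $(\tau_i)^\ast\times(\tau_i)^\ast$, because its coordinates are a constant map and the identity. The multiplication $m$ is continuous by hypothesis; alternatively, this follows from Theorem~\ref{thm:soft_bitop_char}(ii) via $m(a,b)=\Delta_i(a,\Delta_i(e_F,b))$. Hence $L_a$ is continuous. It is a bijection with inverse $L_{a^{-1}}$, by associativity and the identity $e_F$ of Remark~\ref{rem:soft_identity}. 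The inverse $L_{a^{-1}}$ is continuous by the same argument, so $L_a$ is a homeomorphism.

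\emph{Step 2: right translations.} The argument for $R_a$ is symmetric, using $x\mapsto(x,a)$, with inverse $R_{a^{-1}}$.

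\emph{Step 3: inversion.} The map $\iota(x)=x^{-1}=\Delta_i(e_F,x)$ is continuous, exactly as in the proof of Theorem~\ref{thm:criterion_xyinv}. Since $\iota\circ\iota=\mathrm{id}$, the map $\iota$ is its own continuous inverse and therefore a homeomorphism.

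There is no real obstacle here. The only point to state carefully is that continuity of the insertion $x\mapsto(a,x)$ refers to the product topology $(\tau_i)^\ast\times(\tau_i)^\ast$ on $\SE(F)\times\SE(F)$, and not to any box structure on the product. This is immediate, since both coordinate maps are continuous.

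As an optional direct check that bypasses the group axioms, one can verify the claim on boxes. By the componentwise formulas in Definition~\ref{def:soft_group},
\begin{equation}
L_a^{-1}\Bigl(\Box_{t}U_t\Bigr)=\Box_{t}\,a(t)^{-1}U_t .
\end{equation}
This set is a box with open sections, because translations in each component topological group $F(t)$ are homeomorphisms; that fact holds by Theorem~\ref{thm:soft_bitop_char}(iii). The analogous box computations for $R_a$ and $\iota$ confirm the result.
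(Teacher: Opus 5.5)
Your proof is correct and follows essentially the same route as the paper. You fix $i$ and use that $(\SE(F),(\tau_i)^\ast)$ is a topological group, so each translation is continuous with continuous inverse translation by $a^{-1}$, and inversion is its own continuous inverse; the extra identity $m(a,b)=\Delta_i(a,\Delta_i(e_F,b))$ and the optional box computation $L_a^{-1}(\Box_t U_t)=\Box_t\,a(t)^{-1}U_t$ are also correct but not needed.
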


\begin{proof}
Fix $i$. Since $(\SE(F),(\tau_i)^\ast)$ is a topological group, multiplication and inversion are continuous. Each translation is continuous and has continuous inverse translation by $a^{-1}$. Inversion is its own inverse.
\end{proof}

\section{Conjugate Soft Paratopological Groups}\label{sec:conjugate}

The preceding definition allows two independent group topologies. A natural related second topology comes from a paratopological group: the conjugate topology. We now write this construction in soft-element language.

\begin{definition}[Soft semitopological and paratopological group]\label{def:soft_para}
Let $F$ be a soft group and let $\tau$ be a soft topology on $F$.
\begin{alphlist}
\item $(F,\tau)$ is a \emph{soft semitopological group} if, for every $a\in\SE(F)$, the left and right translations $x\mapsto a\ast x$ and $x\mapsto x\ast a$ are continuous on $(\SE(F),\tau^\ast)$.
\item $(F,\tau)$ is a \emph{soft paratopological group} if the multiplication map
\begin{equation}
m:\SE(F)\times\SE(F)\to\SE(F),\qquad m(a,b)=a\ast b,
\end{equation}
is continuous from $\tau^\ast\times\tau^\ast$ to $\tau^\ast$.
\item $(F,\tau)$ is a \emph{soft topological group} if it is a soft paratopological group and inversion $a\mapsto a^{-1}$ is continuous on $(\SE(F),\tau^\ast)$.
\end{alphlist}
\end{definition}

\begin{definition}[Conjugate soft topology]\label{def:conjugate_soft_top}
For a soft subset $H\sse F$ define $H^{-1}\sse F$ by
\begin{equation}
H^{-1}(t):=\{x^{-1}:x\in H(t)\}\qquad(t\in A).
\end{equation}
The \emph{conjugate} or \emph{inverse} soft topology of $\tau$ is the canonical soft topology $\tau^{-1}$ whose component topology at $t$ is
\begin{equation}
(\tau^{-1})_t:=\{U^{-1}:U\in \tau_t\}.
\end{equation}
Equivalently, $H\in\tau^{-1}$ if and only if $H^{-1}\in\tau_{\mathrm{can}}$. Hence $(\tau^{-1})^{-1}=\tau_{\mathrm{can}}$, so the operation is involutive on canonical soft topologies.
\end{definition}

\begin{proposition}\label{prop:selector_inverse_topology}
For every soft topology $\tau$ on a soft group $F$,
\begin{equation}
(\tau^{-1})^\ast=(\tau^\ast)^{-1},
\end{equation}
where the right-hand side denotes the inverse topology on the group $\SE(F)$: a set $W\subseteq\SE(F)$ is open in $(\tau^\ast)^{-1}$ if and only if $W^{-1}:=\{w^{-1}:w\in W\}$ is open in $\tau^\ast$.
\end{proposition}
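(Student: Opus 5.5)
The plan is to compare the two topologies on $\SE(F)$ through bases. Since inversion $\iota(a)=a^{-1}$ is a bijection of $\SE(F)$ onto itself and is an involution, the inverse topology $(\tau^\ast)^{-1}$ is simply the image of $\tau^\ast$ under $\iota$. In other words,
\begin{equation}
(\tau^\ast)^{-1}=\{\iota[W]:W\in\tau^\ast\}=\{W^{-1}:W\in\tau^\ast\}.
\end{equation}
Consequently, if $\mathcal B_\tau$ is a base for $\tau^\ast$, then $\{B^{-1}:B\in\mathcal B_\tau\}$ is a base for $(\tau^\ast)^{-1}$. The reason is that $\iota$ commutes with arbitrary unions and intersections, since it is a bijection.

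It therefore suffices to show that the family $\{B^{-1}:B\in\mathcal B_\tau\}$ coincides with the base $\mathcal B_{\tau^{-1}}$ of Definition~\ref{def:box_tych}. The key computation is that inversion acts coordinatewise on selectors: $a^{-1}(t)=a(t)^{-1}$. For a box $B=\Box_{t\in A}U_t$ this gives
\begin{equation}
B^{-1}=\{a^{-1}:a(t)\in U_t\ \forall t\}=\{b:b(t)\in U_t^{-1}\ \forall t\}=\Box_{t\in A}U_t^{-1}.
\end{equation}
Now $U_t$ ranges over $\tau_t$ exactly when $U_t^{-1}$ ranges over $(\tau^{-1})_t$, by Definition~\ref{def:conjugate_soft_top}. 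Hence $\{B^{-1}:B\in\mathcal B_\tau\}=\mathcal B_{\tau^{-1}}$.

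Two topologies generated by the same base are equal, so $(\tau^{-1})^\ast=(\tau^\ast)^{-1}$. One small point needs checking. The base $\mathcal B_{\tau^{-1}}$ is defined from the component topologies of $\tau^{-1}$, and these must be exactly $\{U^{-1}:U\in\tau_t\}$. This holds because $\tau^{-1}$ is by definition the canonical soft topology with these components. If needed, this is justified by Remark~\ref{rem:selector_forgets_correlations} ($\tau^\ast=(\tau_{\mathrm{can}})^\ast$) and the fact that each $\{U^{-1}:U\in\tau_t\}$ is a topology on $F(t)$, since $x\mapsto x^{-1}$ is a bijection of the subgroup $F(t)$.

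There is no real obstacle here. The only step requiring care is the coordinatewise identity for $B^{-1}$ together with the fact that a bijection transports a base of one topology to a base of the transported topology. I would state both explicitly rather than leave them implicit.
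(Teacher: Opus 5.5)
Your proposal is correct and follows the paper's proof. Both arguments rest on the coordinatewise identity $B^{-1}=\Box_{t\in A}U_t^{-1}$, which matches the basic boxes of $\tau^\ast$ with those of $(\tau^{-1})^\ast$; your explicit remark that the bijective involution $\iota$ carries a base of $\tau^\ast$ to a base of $(\tau^\ast)^{-1}$ simply spells out the paper's ``same calculation in the other direction and passage to arbitrary unions.''
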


\begin{proof}
If $B=\Box_{t\in A}U_t$ is a basic box for $\tau^\ast$, then
\begin{equation}
B^{-1}=\Box_{t\in A}U_t^{-1},
\end{equation}
which is a basic box for $(\tau^{-1})^\ast$. The same calculation in the other direction and passage to arbitrary unions gives the equality of topologies.
\end{proof}

\begin{theorem}\label{thm:conjugate_bitopology}
Let $(F,\tau)$ be a soft paratopological group. Then $(F,\tau^{-1})$ is also a soft paratopological group. Multiplication is continuous in both selector topologies, and inversion is a homeomorphism
\begin{equation}
(\SE(F),\tau^\ast)\longrightarrow(\SE(F),(\tau^{-1})^\ast).
\end{equation}
We call $(\SE(F),\tau^\ast,(\tau^{-1})^\ast)$ the \emph{conjugate bitopological paratopological group} determined by $(F,\tau)$. Moreover, the following conditions are equivalent:
\begin{romanlist}
\item $(F,\tau)$ is a soft topological group;
\item $(F,\tau,\tau^{-1})$ is a soft bitopological group in the sense of Definition~\ref{def:soft_bitop_group};
\item $\tau^\ast=(\tau^{-1})^\ast$.
\end{romanlist}
If $\tau$ is canonical, condition~(iii) is also equivalent to $\tau=\tau^{-1}$.
\end{theorem}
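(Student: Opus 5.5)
The whole argument will be carried out on the ordinary group $\SE(F)$, using Proposition~\ref{prop:selector_inverse_topology} to identify $(\tau^{-1})^\ast$ with the inverse topology $(\tau^\ast)^{-1}$. Write $\iota(x)=x^{-1}$.

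\textbf{Inversion is a homeomorphism.} By the definition of $(\tau^\ast)^{-1}$, a set $W$ is open in $(\tau^\ast)^{-1}$ exactly when $\iota(W)=W^{-1}$ is open in $\tau^\ast$. So $\iota:(\SE(F),\tau^\ast)\to(\SE(F),(\tau^\ast)^{-1})$ is a bijection carrying open sets exactly onto open sets. Since $\iota$ is an involution, it is a homeomorphism.

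\textbf{$\tau^{-1}$ is paratopological.} Let $m$ be multiplication and let $\sigma$ denote the swap $(a,b)\mapsto(b,a)$. The identity $a\ast b=(b^{-1}\ast a^{-1})^{-1}$ gives
\begin{equation}
m=\iota\circ m\circ\sigma\circ(\iota\times\iota).
\end{equation}
For $(\tau^{-1})^\ast$, I read the right-hand side as a chain of continuous maps:
\begin{itemize}
\item $\iota\times\iota$ is a homeomorphism from $((\tau^{-1})^\ast)^2$ to $(\tau^\ast)^2$;
\item $\sigma$ is a homeomorphism of $(\tau^\ast)^2$;
\item $m$ is continuous on $(\tau^\ast)^2$ by hypothesis;
\item $\iota$ carries $\tau^\ast$ back to $(\tau^{-1})^\ast$.
\end{itemize}
Hence $m$ is continuous for $(\tau^{-1})^\ast$. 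By Definition~\ref{def:soft_para}, $(F,\tau^{-1})$ is a soft paratopological group, and multiplication is continuous in both selector topologies.

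\textbf{The equivalences.}

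(i)$\Rightarrow$(iii): if inversion is $\tau^\ast$-continuous, it is a self-homeomorphism of $\tau^\ast$. Then $W$ is open iff $W^{-1}$ is open, so $\tau^\ast=(\tau^\ast)^{-1}=(\tau^{-1})^\ast$.

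(iii)$\Rightarrow$(i): if the two topologies coincide, the homeomorphism $\iota:\tau^\ast\to(\tau^{-1})^\ast$ is a self-homeomorphism of $\tau^\ast$. Together with continuity of $m$, this makes $(F,\tau)$ a soft topological group.

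(i)$\Rightarrow$(ii): $(\SE(F),\tau^\ast)$ is a topological group. By (iii), $(\tau^{-1})^\ast$ is the same topology, so it is one too.

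(ii)$\Rightarrow$(i): this is immediate from the $\tau_1=\tau$ clause of Definition~\ref{def:soft_bitop_group}.

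\textbf{The canonical case.} Here I must compare soft topologies rather than selector topologies, and this is the step needing the most care. Assume $\tau=\tau_{\mathrm{can}}$; note $\tau^{-1}$ is canonical by Definition~\ref{def:conjugate_soft_top}. A canonical topology is determined by its component topologies. So it suffices to show that $\tau^\ast$ determines each $\tau_t$.

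For this I use the coordinate projection $\pi_t$ and the insertion $j_t$ from the proof of Theorem~\ref{thm:soft_bitop_char}:
\begin{itemize}
\item $U\in\tau_t$ implies $\pi_t^{-1}(U)$ is open in $\tau^\ast$;
\item conversely, $U=j_t^{-1}(\pi_t^{-1}(U))$, so openness of $\pi_t^{-1}(U)$ forces $U$ to be open, since $j_t$ is continuous.
\end{itemize}
Thus $\tau_t=\{U\subseteq F(t):\pi_t^{-1}(U)\in\tau^\ast\}$. Consequently $\tau^\ast=(\tau^{-1})^\ast$ gives $\tau_t=(\tau^{-1})_t$ for every $t$, hence $\tau=\tau^{-1}$. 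The converse direction is trivial.

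The only subtle point is that the insertion $j_t$ needs a base selector. This exists because $\SE(F)$ is a group and so contains $e_F$.
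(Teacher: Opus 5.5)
Your proposal is correct and follows essentially the same route as the paper. It uses Proposition~\ref{prop:selector_inverse_topology} to make inversion a homeomorphism $\tau^\ast\to(\tau^{-1})^\ast$, the identity $xy=\iota(\iota(y)\iota(x))$ for continuity of multiplication in the conjugate topology, the observation that inversion is continuous exactly when $\tau^\ast$ equals its inverse topology, and, in the canonical case, recovery of the component topologies from cylinders via $\tau_t=\{U\subseteq F(t):\pi_t^{-1}(U)\in\tau^\ast\}$, which is a more explicit form of the paper's ``fixing the other coordinates'' step through the insertion $j_t$.
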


\begin{proof}
Proposition~\ref{prop:selector_inverse_topology} shows that inversion is a homeomorphism from $\tau^\ast$ onto $(\tau^{-1})^\ast$. If multiplication is continuous for $\tau^\ast$, then it is continuous for $(\tau^{-1})^\ast$ because
\begin{equation}
xy=\iota\bigl(\iota(y)\iota(x)\bigr),\qquad \iota(x)=x^{-1}.
\end{equation}
Thus multiplication is continuous in both topologies and inversion interchanges them. The selector group is a topological group in $\tau^\ast$ exactly when inversion is continuous as a self-map. This happens exactly when the topology agrees with its inverse topology. The equivalence with the independent-topologies definition follows at once. Suppose now that $\tau$ is canonical and the two selector topologies are equal. For $U\in\tau_t$, the cylinder $\pi_t^{-1}(U)$ is open in $(\tau^{-1})^\ast$. Fixing the other coordinates shows that $U$ is a union of sets open in $\tau_t^{-1}$. Hence $\tau_t\subseteq\tau_t^{-1}$, and the reverse inclusion is the same. Thus the component topologies agree, and canonicity gives $\tau=\tau^{-1}$.
\end{proof}

\begin{remark}\label{rem:sorgenfrey_context}
The conjugate pair need not be a soft bitopological group under Definition~\ref{def:soft_bitop_group}. It becomes one precisely when inversion is continuous. The Sorgenfrey line under addition gives the standard example: addition is jointly continuous in the lower-limit topology, while inversion changes it to the upper-limit topology.
\end{remark}

\begin{definition}[Soft $\omega$-boundedness for the conjugate join]\label{def:soft_omega_bounded}
Let $(F,\tau)$ be a soft paratopological group. We say that the conjugate join is \emph{soft $\omega$-bounded} if every countable subset of $\SE(F)$ has compact closure in the join topology
\begin{equation}
\tau^\ast\vee(\tau^{-1})^\ast.
\end{equation}
This is the $\omega$-boundedness condition for the join topology used by Alas and Sanchis.
\end{definition}

\begin{theorem}\label{thm:soft_automatic_continuity}
The classical results below apply directly to the selector group.
\begin{romanlist}
\item If $(F,\tau)$ is a soft semitopological group and $(\SE(F),\tau^\ast)$ is locally compact Hausdorff, then $(F,\tau)$ is a soft topological group (Ellis \cite{Ellis1957}).
\item If $(F,\tau)$ is a soft paratopological group and $(\SE(F),\tau^\ast)$ is compact and $T_0$, then $(F,\tau)$ is a soft topological group (Numakura \cite{Numakura1952}; Romaguera--Sanchis \cite{RomagueraSanchis2007}).
\item If $(F,\tau)$ is a soft paratopological group and $(\SE(F),\tau^\ast)$ is countably compact and $T_0$, then $(F,\tau)$ is a soft topological group if and only if the conjugate join $\tau^\ast\vee(\tau^{-1})^\ast$ is soft $\omega$-bounded (Alas--Sanchis \cite{AlasSanchis2007}).
\end{romanlist}
\end{theorem}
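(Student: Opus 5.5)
The plan is a pure transfer argument. Each clause is a classical theorem about a group carrying a topology. The selector group $(\SE(F),\ast,\tau^\ast)$ is exactly such an object, so it is enough to match the definitions.

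By Definition~\ref{def:soft_para}, $(F,\tau)$ is a soft semitopological (respectively paratopological, topological) group exactly when the classical group $(\SE(F),\ast)$ with topology $\tau^\ast$ is a semitopological (respectively paratopological, topological) group. Here "topological" uses clause (c): joint continuity of multiplication together with continuity of inversion. The same holds for soft topological groups in the sense of Definition~\ref{def:soft_top_group}, which asks directly that $(\SE(F),\ast,\tau^\ast)$ be a topological group. So the translation is literal and needs no extra assumption on $A$.

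For (i), I apply Ellis's theorem: a semitopological group whose topology is locally compact Hausdorff is a topological group. The input is separately continuous multiplication, which is precisely continuity of the translations in Definition~\ref{def:soft_para}(a). The output says inversion and multiplication are continuous for $\tau^\ast$, which is the soft conclusion.

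For (ii), I apply the Numakura / Romaguera--Sanchis theorem to $(\SE(F),\tau^\ast)$. It states that a compact $T_0$ paratopological group is a topological group.

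For (iii), the one point needing care is that the join used by Alas--Sanchis is the join of the topology with its inverse topology on the group $\SE(F)$. By Proposition~\ref{prop:selector_inverse_topology}, $(\tau^\ast)^{-1}=(\tau^{-1})^\ast$. So the classical join $\tau^\ast\vee(\tau^\ast)^{-1}$ coincides with $\tau^\ast\vee(\tau^{-1})^\ast$. Hence classical $\omega$-boundedness of that join is literally the condition in Definition~\ref{def:soft_omega_bounded}. The Alas--Sanchis equivalence then transfers verbatim.

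The main obstacle is not in the argument but in the citation bookkeeping. I must confirm the exact hypotheses of each cited classical theorem. For example, Ellis may need only local compactness plus Hausdorff, and Alas--Sanchis requires countable compactness and $T_0$, with $\omega$-boundedness taken in the join. I must also check that the soft definitions were chosen to match those hypotheses. Beyond that, Proposition~\ref{prop:selector_inverse_topology} is the only paper-specific ingredient, used to identify the conjugate join.
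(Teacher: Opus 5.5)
Your route is the same as the paper's: translate the definitions literally, apply the cited classical theorem to $(\SE(F),\ast,\tau^\ast)$, and use Proposition~\ref{prop:selector_inverse_topology} to identify the conjugate join. For (i) and (ii) this works.

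\textbf{The gap in (iii).} The step that fails is your claim that the Alas--Sanchis condition is ``literally'' the condition of Definition~\ref{def:soft_omega_bounded}, so that the equivalence ``transfers verbatim''. This is exactly the check you deferred, and it does not hold. If $\omega$-bounded means ``every countable set has compact closure'', the ``only if'' half of (iii) is false, so no classical theorem can supply it.

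\textbf{Counterexample.} Take $A=\{t\}$ and $F(t)=G$, where $G$ is a countably compact Hausdorff topological group that is not $\omega$-bounded. Let $\tau$ be the canonical soft topology with $\tau_t$ equal to the group topology. Then:
\begin{itemize}
\item $(F,\tau)$ is a soft topological group, and $\tau^\ast$ is countably compact and $T_0$;
\item $\tau^{-1}=\tau$, so the conjugate join is just $\tau^\ast$;
\item $\tau^\ast$ fails Definition~\ref{def:soft_omega_bounded}.
\end{itemize}
Such a $G$ exists in ZFC. The group $K=(\mathbb Z/2\mathbb Z)^{\mathfrak c}$ is separable, so fix a countable dense $D\subseteq K$. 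Run a recursion of length $\omega_1$, using $\mathfrak c^{\aleph_0}=\mathfrak c$. It builds a subgroup $G\supseteq D$ with $|G|\le\mathfrak c$ that contains a $K$-accumulation point of each of its countably infinite subsets; this works because each such subset already appears at a countable stage. Then $G$ is countably compact. But the closure of $D$ in $G$ is all of $G$, and $G$ is not compact: a compact dense subgroup of $K$ would be $K$ itself, while $|G|\le\mathfrak c<2^{\mathfrak c}=|K|$.

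\textbf{Shared with the paper, and a possible repair.} The paper's own proof makes the same identification. So the defect lies in the statement and in Definition~\ref{def:soft_omega_bounded}, not only in your write-up. In the older topological-group terminology, ``$\omega$-bounded'' often means Guran's notion, now called $\omega$-narrow: every neighbourhood $W$ of the identity admits a countable $C$ with $CW$ equal to the whole group. This is presumably the sense intended in the cited source.

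If soft $\omega$-boundedness is redefined as $\omega$-narrowness of $(\SE(F),\tau^\ast\vee(\tau^{-1})^\ast)$, your transfer goes through. The ``only if'' direction then becomes immediate. By Theorem~\ref{thm:conjugate_bitopology} the join equals $\tau^\ast$. A countably compact $T_0$ topological group is Hausdorff and precompact, hence $\omega$-narrow.

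Alternatively, keep the current definition and weaken (iii) to the ``if'' direction only. An $\omega$-bounded join in the compact-closure sense is countably compact, hence precompact and $\omega$-narrow. So under that reading the classical result still yields the ``if'' direction.

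Either way, the point you flagged as mere citation bookkeeping is substantive: you have to fix what ``$\omega$-bounded'' means in the source before anything transfers.
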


\begin{proof}
Apply the cited classical theorem to the induced semitopological or paratopological group $(\SE(F),\tau^\ast)$. By Proposition~\ref{prop:selector_inverse_topology}, the conjugate topology on this group is $(\tau^{-1})^\ast$. Thus the Alas--Sanchis condition is the soft $\omega$-boundedness condition in Definition~\ref{def:soft_omega_bounded}.
\end{proof}

\begin{remark}\label{rem:alas_sanchis_finite_parameters}
Item~(iii) should be read with Proposition~\ref{prop:box_join_omega}. For infinite parameter sets, $\tau^\ast$ is a box topology, so countable compactness and $\omega$-boundedness are strong assumptions. The criterion is therefore most useful for finite $A$, where box and product topologies agree, and for special infinite-parameter cases where the box topology still has compactness properties. Arguments using the coarser product topology $\tau^\Pi$ need separate hypotheses.
\end{remark}

\begin{theorem}\label{thm:ravsky_soft}
Let $(F,\tau)$ be a soft paratopological group and let $e_F$ be the identity selector. Then $(F,\tau)$ is a soft topological group if and only if, for every $\tau^\ast$-neighbourhood $U$ of $e_F$, the set
\begin{equation}
U\cap U^{-1}
\end{equation}
has nonempty interior in $(\SE(F),\tau^\ast)$.
\end{theorem}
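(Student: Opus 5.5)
This is Ravsky's criterion applied to the paratopological group $G=(\SE(F),\ast,\tau^\ast)$. I would prove it directly rather than rely on an external citation.

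\emph{Necessity.} If $(F,\tau)$ is a soft topological group, inversion is continuous on $\tau^\ast$. So for a neighbourhood $U$ of $e_F$, the set $U^{-1}$ is also a neighbourhood of $e_F$. Then $U\cap U^{-1}$ contains $e_F$ in its interior, and the interior is nonempty.

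\emph{Sufficiency.} Since multiplication is already continuous, it suffices to show that inversion is continuous at $e_F$. Indeed, for general $a$ we can write
\begin{equation}
x^{-1}=a^{-1}\ast(x\ast a^{-1})^{-1},
\end{equation}
and left and right translations are continuous by joint continuity of $m$. So translations are homeomorphisms, since their inverses are translations. Continuity at $e_F$ therefore transports to every point.

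Now fix a neighbourhood $U$ of $e_F$; we need a neighbourhood $W$ of $e_F$ with $W^{-1}\subseteq U$. By continuity of $m$ at $(e_F,e_F)$, choose an open $V\ni e_F$ with $V\ast V\ast V\subseteq U$, applying continuity twice. By hypothesis, $V\cap V^{-1}$ contains a nonempty open set $O$. Pick $g\in O$, so $g\in V$ and $g^{-1}\in V$. Put
\begin{equation}
W:=g^{-1}\ast O,
\end{equation}
which is open because left translation is a homeomorphism, and contains $e_F$. For $w=g^{-1}\ast o\in W$ we get
\begin{equation}
w^{-1}=o^{-1}\ast g.
\end{equation}
Here $o\in V^{-1}$ gives $o^{-1}\in V$, and $g\in V$. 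Hence $w^{-1}\in V\ast V\subseteq U$, so $W^{-1}\subseteq U$. This proves continuity of inversion at $e_F$, hence everywhere. By Definition~\ref{def:soft_para}(c), $(F,\tau)$ is then a soft topological group. Equivalently, by Theorem~\ref{thm:conjugate_bitopology}, $\tau^\ast=(\tau^{-1})^\ast$.

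\emph{Main obstacle.} The delicate step is producing $W$ from an interior point $g$ that need not be $e_F$. The trick is to recentre by the translation $g^{-1}$ and to use that both $g$ and $g^{-1}$ lie in $V$, which keeps $w^{-1}$ inside $V\ast V$. The box structure of $\tau^\ast$ plays no role beyond the group axioms. Choosing $V$ with $V\ast V\ast V\subseteq U$ gives extra slack; only $V\ast V\subseteq U$ is actually needed.
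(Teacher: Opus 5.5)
Your proof is correct, but it takes a different route from the paper. The paper gives no argument of its own: it applies the Ravsky--Reznichenko criterion (Lemma~7.1 of Banakh--Ravsky) to the paratopological group $(\SE(F),\ast,\tau^\ast)$ and stops there. You instead prove the criterion from the group axioms:
\begin{itemize}
\item necessity, because $U^{-1}$ is a neighbourhood of $e_F$ once inversion is continuous;
\item sufficiency, by recentring a nonempty open $O\subseteq V\cap V^{-1}$ at a point $g\in O$, so that $W=g^{-1}\ast O$ is an open neighbourhood of $e_F$ with $W^{-1}=O^{-1}\ast g\subseteq V\ast V\subseteq U$;
\item extension from $e_F$ to every point by the transport identity $x^{-1}=a^{-1}\ast(x\ast a^{-1})^{-1}$.
\end{itemize}
The citation is shorter and ties the statement to the literature. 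However, it leaves the reader to check that the cited formulation matches the \emph{nonempty interior} hypothesis used here. Your argument is self-contained and shows explicitly what is used: continuity of translations and continuity of multiplication at $(e_F,e_F)$. So the box structure of $\tau^\ast$ indeed plays no role.

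Incidentally, the slack $V\ast V\ast V\subseteq U$ that you call unnecessary is exactly what lets your argument cover a weaker hypothesis: that $U\cap U^{-1}$ is merely not nowhere dense. Put $B=V\cap V^{-1}$ and suppose a nonempty open $O$ lies in $\overline{B}$. Pick $g\in O\cap B$. Using $\overline{B}\subseteq B\ast V^{-1}$, you get $O^{-1}\ast g\subseteq V\ast V\ast V$.
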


\begin{proof}
Apply the Ravsky--Reznichenko criterion to the paratopological group $(\SE(F),\tau^\ast)$. Lemma~7.1 of Banakh--Ravsky~\cite{BanakhRavsky2019Banalytic} states this criterion and traces it to Lemmas~5.1, 5.9 and~5.10 of Ravsky's thesis~\cite{RavskyThesis2002} and Proposition~4 of Ravsky--Reznichenko~\cite{RavskyReznichenko2002}.
\end{proof}

\begin{proposition}\label{prop:box_join_omega}
The join topology
\begin{equation}
\tau^\ast\vee(\tau^{-1})^\ast
\end{equation}
on $\SE(F)=\prod_{t\in A}F(t)$ is the box topology generated by the component joins $\tau_t\vee\tau_t^{-1}$. If $A$ is finite and each coordinate join $(F(t),\tau_t\vee\tau_t^{-1})$ is $\omega$-bounded, then the soft conjugate join is $\omega$-bounded. For infinite $A$, coordinatewise compactness or coordinatewise $\omega$-boundedness need not imply the corresponding soft property for the box join.
\end{proposition}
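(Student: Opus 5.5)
The proof has three parts: identify the join as a box topology, prove the finite-parameter $\omega$-boundedness claim, and give an infinite-parameter counterexample.

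\textbf{The join is a box topology.} By Proposition~\ref{prop:selector_inverse_topology} and Definition~\ref{def:box_tych}, $\tau^\ast$ has the base of boxes $\Box_t U_t$ with $U_t\in\tau_t$. Likewise $(\tau^{-1})^\ast$ has the base of boxes $\Box_t V_t$ with $V_t\in\tau_t^{-1}$. The join of two topologies has as a base the pairwise intersections of basic sets. Since
\begin{equation}
\Box_tU_t\cap\Box_tV_t=\Box_t(U_t\cap V_t),
\end{equation}
every basic join-open set is a box with coordinates $U_t\cap V_t$. The sets $U_t\cap V_t$ form a base of $\tau_t\vee\tau_t^{-1}$. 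Conversely, any box whose coordinates are open in $\tau_t\vee\tau_t^{-1}$ is a union of such boxes: for a point $x$ in the box, choose a basic neighbourhood $U_t\cap V_t\ni x(t)$ inside the coordinate for each $t$, which uses choice. So the join is exactly the box topology of the coordinate joins.

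\textbf{Finite $A$.} Here the box topology equals the Tychonoff product (Proposition~\ref{thm:induced_topology}, applied to the coordinate joins). Let $C\subseteq\SE(F)$ be countable. Each projection $\pi_t(C)$ is countable, so by hypothesis $K_t:=\overline{\pi_t(C)}$ is compact in $(F(t),\tau_t\vee\tau_t^{-1})$. The finite product $\prod_tK_t$ is compact by Tychonoff for finitely many factors. It is closed, and it contains $C$. Hence $\overline C$ is a closed subset of a compact set, so it is compact. Thus the soft conjugate join is $\omega$-bounded.

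\textbf{Infinite $A$: counterexample.} This is the main step requiring care. Take $A=\mathbb N$ and $G=F(t)=\mathbb Z_2$, each with the discrete topology. Then $\tau_t=\tau_t^{-1}$ is discrete and compact, hence $\omega$-bounded, and every coordinate group is a topological group. The box join is the box topology on $\mathbb Z_2^{\mathbb N}$, which is discrete because each singleton is the box of singletons.

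Now let $C$ be the set of selectors that are eventually $0$ and take the value $1$ at exactly one coordinate. Then $C$ is countable and infinite. In the discrete topology $C$ is closed, and an infinite discrete set is not compact. So the box join is not $\omega$-bounded. Likewise $\SE(F)$ itself is infinite and discrete, so it is not compact, which disproves the compactness version too.

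The only subtle point is checking that this example fits the soft framework. Take $\tau$ to be the canonical soft topology with discrete components; then $\tau^{-1}=\tau$, and $(F,\tau)$ is a soft (para)topological group by Theorem~\ref{thm:soft_bitop_char}.
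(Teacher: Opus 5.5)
Your proof is correct and follows the paper's argument in all three parts: the join identity via $\Box_tU_t\cap\Box_tV_t=\Box_t(U_t\cap V_t)$ with a coordinatewise choice of basic neighbourhoods, the finite case via the closed compact set $\prod_{t\in A}K_t$ built from the coordinate closures, and the discrete $\mathbb{Z}/2\mathbb{Z}$ box counterexample. Your explicit countable set $C$ and your remark that $\SE(F)$ itself is noncompact only make the paper's last step concrete. Fixing $A=\mathbb{N}$ is harmless, since the same construction on a countably infinite subset of any infinite $A$ gives the counterexample.
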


\begin{proof}
Every finite intersection of boxes from $\tau^\ast$ and $(\tau^{-1})^\ast$ is open in the box topology generated by the component joins. Conversely, let $x$ lie in a box $\prod_tW_t$ with $W_t\in\tau_t\vee\tau_t^{-1}$. For each $t$, choose $U_t\in\tau_t$ and $V_t\in\tau_t^{-1}$ such that
\begin{equation}
x(t)\in U_t\cap V_t\subseteq W_t.
\end{equation}
Then $(\prod_tU_t)\cap(\prod_tV_t)$ is a join-open neighbourhood of $x$ contained in $\prod_tW_t$. This proves the box-join identity. If $A$ is finite and $C\subseteq\SE(F)$ is countable, let $K_t$ be the compact closure of the $t$-th coordinate projection of $C$. The closure of $C$ is contained in the finite product $\prod_{t\in A}K_t$. It is closed in that compact product and is therefore compact.

For the infinite-parameter case, take $F(t)=\mathbb Z/2\mathbb Z$ for every $t\in A$ with the discrete group topology in each coordinate. Each coordinate is compact and $\omega$-bounded. But the box topology on $\prod_{t\in A}F(t)$ is discrete, because every singleton is a box. If $A$ is infinite, this selector group is infinite and an infinite countable subset has noncompact closure. Hence the soft box join is not $\omega$-bounded.
\end{proof}

\section{Pairwise Selector Separation Axioms}\label{sec:separation}

Pairwise separation through soft elements was considered in \cite{SoftBTSviaSE}. The Hausdorff clause used there requires $H\Sinter K=\Phi$, which is too strong when two selectors agree at some parameters. To match the induced bitopology, disjointness is taken in the selector space.

\begin{definition}[Pairwise soft separation axioms]\label{def:pairwise_T}
Let $(F,\tau_1,\tau_2)$ be a soft bitopological space.
\begin{alphlist}
\item It is \emph{pairwise soft $T_0$} if for every $a,b\in\SE(F)$ with $a\neq_s b$ there is $H\in\tau_1\cup\tau_2$ such that $a\in_sH$ and $b\notin_sH$, or vice versa.
\item It is \emph{pairwise soft $T_1$} if for every $a\neq_s b$ there are $H\in\tau_1$ and $K\in\tau_2$ such that $a\in_sH$, $b\notin_sH$, $b\in_sK$, and $a\notin_sK$.
\item It is \emph{pairwise soft Hausdorff}, or \emph{pairwise selector-Hausdorff}, if for every $a\neq_s b$ there are $H\in\tau_1$ and $K\in\tau_2$ such that
\begin{equation}
a\in_sH,\qquad b\in_sK,\qquad \SE(H)\cap\SE(K)=\varnothing.
\end{equation}
\end{alphlist}
The last condition says that the selector boxes are disjoint. It does not require $H(t)\cap K(t)=\varnothing$ for every parameter.
\end{definition}

\begin{proposition}\label{prop:T_chain}
Pairwise soft Hausdorff implies pairwise soft $T_1$, and pairwise soft $T_1$ implies pairwise soft $T_0$.
\end{proposition}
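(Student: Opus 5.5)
The plan is to prove the two implications directly from Definition~\ref{def:pairwise_T}. Throughout I use the convention that $a\in_s H$ means $a\in\SE(H)$, i.e.\ $a(t)\in H(t)$ for every $t\in A$. Fix selectors $a,b\in\SE(F)$ with $a\neq_s b$.

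\emph{Hausdorff $\Rightarrow T_1$.} By the pairwise soft Hausdorff condition there are $H\in\tau_1$ and $K\in\tau_2$ with $a\in_sH$, $b\in_sK$ and $\SE(H)\cap\SE(K)=\varnothing$. Since $b\in\SE(K)$ and the selector boxes are disjoint, $b\notin\SE(H)$, that is, $b\notin_sH$. Symmetrically, $a\in\SE(H)$ gives $a\notin\SE(K)$, so $a\notin_sK$. These are exactly the four conditions required for pairwise soft $T_1$, with the same sets $H$ and $K$.

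\emph{$T_1\Rightarrow T_0$.} Pairwise soft $T_1$ supplies $H\in\tau_1\subseteq\tau_1\cup\tau_2$ with $a\in_sH$ and $b\notin_sH$. This single set already witnesses pairwise soft $T_0$.

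The only point that needs care is reading membership $\in_s$ as membership in the selector box $\SE(H)$, not as sectionwise membership. With that reading, disjointness of $\SE(H)$ and $\SE(K)$ is precisely what excludes $b$ from $H$ and $a$ from $K$. We need no sectionwise disjointness $H(t)\cap K(t)=\varnothing$, in line with the remark following Definition~\ref{def:pairwise_T}. There is no genuine obstacle: both implications are immediate once the definitions are unfolded.
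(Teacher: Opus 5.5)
Your proof is correct and matches the paper's argument: disjointness of $\SE(H)$ and $\SE(K)$ forces $b\notin_sH$ and $a\notin_sK$, and the $\tau_1$-open set from the $T_1$ condition already witnesses $T_0$. Your reading of $\in_s$ as membership in $\SE(H)$ is the one fixed by Definition~\ref{def:softset}.
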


\begin{proof}
If $a\in_sH$, $b\in_sK$, and $\SE(H)\cap\SE(K)=\varnothing$, then $b\notin_sH$ and $a\notin_sK$. This gives the $T_1$ condition. The implication from $T_1$ to $T_0$ is immediate.
\end{proof}

\begin{theorem}\label{thm:componentwise_T}
Let $(F,\tau_1,\tau_2)$ be a soft bitopological space and let $j\in\{0,1,2\}$, where $j=2$ means pairwise selector-Hausdorff. For $t\in A$, write
\begin{equation}
\mathcal F_t:=\bigl(F(t),(\tau_1)_t,(\tau_2)_t\bigr).
\end{equation}
\begin{romanlist}
\item If $(F,\tau_1,\tau_2)$ is pairwise soft $T_j$, then every $\mathcal F_t$ is pairwise $T_j$.
\item If both $\tau_1$ and $\tau_2$ are canonical and every $\mathcal F_t$ is pairwise $T_j$, then $(F,\tau_1,\tau_2)$ is pairwise soft $T_j$.
\end{romanlist}
Hence, for canonical soft bitopologies,
\begin{equation}
(F,\tau_1,\tau_2)\text{ is pairwise soft }T_j
\quad\Longleftrightarrow\quad
(F(t),(\tau_1)_t,(\tau_2)_t)\text{ is pairwise }T_j\text{ for every }t\in A.
\end{equation}
\end{theorem}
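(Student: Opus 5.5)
The plan is to prove (i) by embedding a single coordinate into the selector space, and (ii) by building one-coordinate cylinders, which exist because the topologies are canonical. Throughout, $j=0,1,2$ are handled in parallel. The key fact is the elementary membership test $a\in_s H\iff a(s)\in H(s)$ for all $s\in A$. Since $\SE(H)=\Box_{s}H(s)$, a selector box is empty exactly when one of its factors is empty.

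For (i), fix $t\in A$ and distinct $x,y\in F(t)$. Because all sections are nonempty, we may choose a base selector $c\in\SE(F)$, as in the proof of Theorem~\ref{thm:soft_bitop_char}. Put $a=j_t(x)$ and $b=j_t(y)$, so $a\neq_s b$ and the two selectors agree off $t$.

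\emph{Case $j=0$ or $j=1$.} Suppose $H\in\tau_i$ with $a\in_sH$ and $b\notin_sH$. Then $x\in H(t)\in(\tau_i)_t$. Moreover, some $s$ satisfies $b(s)\notin H(s)$. For $s\neq t$ we have $b(s)=c(s)=a(s)\in H(s)$, so necessarily $s=t$, which gives $y\notin H(t)$. This transfers the $T_0$ condition, and likewise the $T_1$ condition with $H\in\tau_1$ and $K\in\tau_2$. For $T_1$ the roles are fixed: the $\tau_1$-set contains $x$ and the $\tau_2$-set contains $y$, matching the pairwise $T_1$ definition on $\mathcal F_t$.

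\emph{Case $j=2$.} Take $H\in\tau_1$ and $K\in\tau_2$ with $a\in_sH$, $b\in_sK$ and $\SE(H)\cap\SE(K)=\varnothing$. Then
\begin{equation}
\SE(H)\cap\SE(K)=\SE(H\Sinter K)=\prod_{s}\bigl(H(s)\cap K(s)\bigr),
\end{equation}
using Remark~\ref{rem:SE_union_intersection}. For $s\neq t$ the factor contains $c(s)$, so the empty product forces $H(t)\cap K(t)=\varnothing$. We also have $x\in H(t)\in(\tau_1)_t$ and $y\in K(t)\in(\tau_2)_t$. Hence $\mathcal F_t$ is pairwise Hausdorff.

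For (ii), let $a\neq_s b$ and choose $t_0$ with $a(t_0)\neq b(t_0)$. Apply the pairwise $T_j$ property of $\mathcal F_{t_0}$ to $a(t_0),b(t_0)$, obtaining an open $U$ (and an open $V$ in the appropriate component topology when $j\ge1$). Define the cylinder $H$ by $H(t_0)=U$ and $H(s)=F(s)$ for $s\neq t_0$, and define $K$ analogously from $V$. Since $F\in\tau_i$ gives $F(s)\in(\tau_i)_s$, every section of $H$ is component-open. Canonicity of $\tau_1$ and $\tau_2$ then places $H$ and $K$ in the required topologies. Membership of $a$ and $b$ is decided by the $t_0$-coordinate alone, which yields the $T_0$ and $T_1$ clauses. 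For $j=2$, $U\cap V=\varnothing$ makes the $t_0$-factor of $\SE(H)\cap\SE(K)$ empty, so the boxes are disjoint.

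The final equivalence combines (i) and (ii). I expect no serious obstacle. The only delicate point is the Hausdorff case of (i): there one must use the base selector to ensure that every factor off $t$ is nonempty, so that disjointness of the boxes is forced at the single coordinate $t$. It is also worth noting where canonicity enters: a general $\tau_i$ may contain no one-coordinate cylinder. Remark~\ref{rem:selector_forgets_correlations} shows exactly this failure, so canonicity cannot be dropped in (ii).
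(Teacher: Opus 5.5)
Your proposal is correct and follows the paper's proof essentially step for step. In (i) you embed a single coordinate through selectors that agree with a base selector $c$ off $t$, and in (ii) you use one-coordinate cylinders, which lie in $\tau_1$ and $\tau_2$ by canonicity. The only difference is cosmetic: in the Hausdorff case of (i) you argue through $\SE(H)\cap\SE(K)=\prod_s\bigl(H(s)\cap K(s)\bigr)$, whose factors off $t$ contain $c(s)$, whereas the paper writes down the common selector $d$ with $d(t)=z$ explicitly, which is the same argument.
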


\begin{proof}
\emph{(i)} Fix $t\in A$ and distinct $x,y\in F(t)$. Choose $c\in\SE(F)$ and define selectors $a,b$ by
\begin{equation}
a(t)=x,\quad b(t)=y,\quad a(s)=b(s)=c(s)\quad(s\neq t).
\end{equation}
For $j=0$, choose $H\in\tau_1\cup\tau_2$ with, say, $a\in_sH$ and $b\notin_sH$. Since $a$ and $b$ agree outside $t$, the failure of $b$ to belong to $H$ must occur at $t$. Thus $x\in H(t)$ and $y\notin H(t)$.

For $j=1$, choose $H\in\tau_1$ and $K\in\tau_2$ as in Definition~\ref{def:pairwise_T}. Again, because $a$ and $b$ agree outside $t$, their nonmembership must occur at $t$. Hence
\begin{equation}
x\in H(t),\quad y\notin H(t),\quad y\in K(t),\quad x\notin K(t).
\end{equation}
For $j=2$, choose $H\in\tau_1$ and $K\in\tau_2$ with $a\in_sH$, $b\in_sK$, and $\SE(H)\cap\SE(K)=\varnothing$. If $z\in H(t)\cap K(t)$, define $d(t)=z$ and $d(s)=c(s)$ for $s\neq t$. Since $a(s)=b(s)=c(s)$ outside $t$, this selector lies in both $\SE(H)$ and $\SE(K)$, a contradiction. Thus $H(t)\cap K(t)=\varnothing$.

\emph{(ii)} Let $a,b\in\SE(F)$ be distinct and choose $t_0$ with $a(t_0)\neq b(t_0)$. For $j=0$, take a component open set $U$ that separates $a(t_0)$ and $b(t_0)$, and define $H(t_0)=U$ and $H(t)=F(t)$ for $t\neq t_0$. Canonicity gives $H\in\tau_1\cup\tau_2$.

For $j=1$, choose $U\in(\tau_1)_{t_0}$ and $V\in(\tau_2)_{t_0}$ with
\begin{equation}
a(t_0)\in U,\quad b(t_0)\notin U,\quad b(t_0)\in V,\quad a(t_0)\notin V.
\end{equation}
Set $H(t_0)=U$, $K(t_0)=V$, and use the full sections $F(t)$ elsewhere. Then $H\in\tau_1$ and $K\in\tau_2$, and they give the soft $T_1$ condition.

For $j=2$, choose disjoint $U\in(\tau_1)_{t_0}$ and $V\in(\tau_2)_{t_0}$ containing $a(t_0)$ and $b(t_0)$, respectively. Define $H$ and $K$ as above. If a selector belonged to both $\SE(H)$ and $\SE(K)$, its value at $t_0$ would lie in $U\cap V$, which is impossible. Thus the two selector boxes are disjoint.
\end{proof}

\begin{remark}
For canonical soft bitopologies, Theorem~\ref{thm:componentwise_T} reduces pairwise separation to the component spaces. Sectionwise disjointness $H\Sinter K=\Phi$ is not needed.
\end{remark}

\begin{proposition}\label{prop:soft_to_induced_T}
If $(F,\tau_1,\tau_2)$ is pairwise soft $T_j$ for $j\in\{0,1,2\}$, then the induced bitopological space $(\SE(F),(\tau_1)^\ast,(\tau_2)^\ast)$ is pairwise $T_j$ in the classical sense. The converse can fail, already for $T_0$.
\end{proposition}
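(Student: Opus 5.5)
The forward direction is a direct translation. For each soft open $H$ in $\tau_i$, the set $\SE(H)=\Box_{t\in A}H(t)$ is a basic box of $(\tau_i)^\ast$. Here $H(t)\in(\tau_i)_t$ by the definition of the component topology in Definition~\ref{def:soft_topology}, so $\SE(H)$ is open in $(\tau_i)^\ast$. Moreover, $a\in_s H$ means exactly $a\in\SE(H)$.

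Given distinct selectors $a,b$, I take the soft open sets furnished by Definition~\ref{def:pairwise_T} and pass to their selector sets. For $T_0$, a single $H\in\tau_1\cup\tau_2$ gives an open set $\SE(H)$ in $(\tau_1)^\ast\cup(\tau_2)^\ast$ that contains one point and not the other. For $T_1$, the sets $\SE(H)\in(\tau_1)^\ast$ and $\SE(K)\in(\tau_2)^\ast$ realize the classical pairwise $T_1$ condition. For the Hausdorff case, the hypothesis $\SE(H)\cap\SE(K)=\varnothing$ is already disjointness of the two open neighbourhoods in the selector space. No further argument is needed there.

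For the failure of the converse I plan to reuse the correlated topology of Remark~\ref{rem:selector_forgets_correlations}. Let $A=\{1,2\}$ and $F(1)=F(2)=\{0,1\}$. Put $H_i(t)=\{i\}$ and $\tau_1=\tau_2=\tau=\{\Phi,F,H_0,H_1\}$. The component topologies are discrete, so $(\tau_i)^\ast$ is discrete on $\{0,1\}^2$. The induced bitopological space is therefore pairwise $T_0$, and indeed pairwise Hausdorff.

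Now take the selectors $a=(0,1)$ and $b=(1,0)$. Neither lies in $\SE(H_0)=\{(0,0)\}$ or in $\SE(H_1)=\{(1,1)\}$. Both lie in $\SE(F)$, and neither lies in $\SE(\Phi)=\varnothing$. So no member of $\tau_1\cup\tau_2$ contains exactly one of them, and the soft space is not pairwise soft $T_0$. By Proposition~\ref{prop:T_chain} it is then not pairwise soft $T_1$ or Hausdorff either, so this one example shows the converse fails at every level $j$.

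The only point needing care is checking that $\tau$ is a genuine soft topology, that is, closed under soft unions and intersections. We have $H_0\Sunion H_1=F$ and $H_0\Sinter H_1=\Phi$, and all other unions and intersections are trivial, so this is routine. This is where the statement's phrase ``already for $T_0$'' is realized.
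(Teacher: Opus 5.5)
Your proposal is correct and follows the paper's proof. Selector boxes $\SE(H)$ of soft open sets are open in $(\tau_i)^\ast$, so the separation conditions transfer directly, and the converse fails via the correlated topology of Remark~\ref{rem:selector_forgets_correlations} with $\tau_1=\tau_2=\tau$, where $(0,1)$ and $(1,0)$ are not separated. Your additional observations — the explicit check that $\tau$ is a soft topology, and the use of Proposition~\ref{prop:T_chain} to show the same example defeats the converse at every level $j$ — are correct small refinements.
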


\begin{proof}
For every $H\in\tau_i$, the selector box $\SE(H)$ is open in $(\tau_i)^\ast$. The three implications therefore follow directly from Definition~\ref{def:pairwise_T}. For the converse, take the topology $\tau$ in Remark~\ref{rem:selector_forgets_correlations} and put $\tau_1=\tau_2=\tau$. Both induced topologies are discrete, but the selectors $(0,1)$ and $(1,0)$ cannot be separated by the original proper soft open sets. Thus the soft space is not pairwise soft $T_0$.
\end{proof}

A $T_0$ topological group is automatically Hausdorff. We use this standard fact below.

\begin{proposition}\label{prop:T0_implies_T2_group}
Let $(F,\tau_1,\tau_2)$ be a soft bitopological group. If for some $i\in\{1,2\}$ the induced topology $(\tau_i)^\ast$ on $\SE(F)$ is $T_0$, then $(\SE(F),(\tau_i)^\ast)$ is Hausdorff.
\end{proposition}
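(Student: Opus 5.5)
The plan is to reduce the statement to the classical fact that a $T_0$ topological group is Hausdorff, and to make that fact explicit inside the selector group. By Definition~\ref{def:soft_bitop_group}, $G_i:=(\SE(F),\ast,(\tau_i)^\ast)$ is a classical topological group. Proposition~\ref{prop:translations} then says that translations and inversion are homeomorphisms of $G_i$. No soft structure beyond this is needed; all work takes place in $G_i$.

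The first step upgrades $T_0$ to $T_1$. Take $a\neq_s b$ in $\SE(F)$. By $T_0$ there is an open $W$ containing exactly one of them; say $a\in W$ and $b\notin W$ (the other case is symmetric). Put $V:=L_{a^{-1}}(W)$, an open neighbourhood of $e_F$ not containing $c:=a^{-1}\ast b$. The set $c\ast V^{-1}$ is then open and contains $c$. It does not contain $e_F$, since $e_F=c\ast v^{-1}$ would give $v=c\in V$. Translating back by $L_a$ produces an open set containing $b$ but not $a$. Hence every pair of distinct selectors can be separated in both directions, so $G_i$ is $T_1$ and in particular $\{e_F\}$ is closed.

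The second step upgrades $T_1$ to Hausdorff. Let $a\neq_s b$ and set $c=a^{-1}\ast b\neq e_F$. By $T_1$, $U:=\SE(F)\setminus\{c\}$ is an open neighbourhood of $e_F$. The map $(x,y)\mapsto x^{-1}\ast y$ is continuous and sends $(e_F,e_F)$ to $e_F$, so continuity at that point gives an open neighbourhood $V$ of $e_F$ with $V^{-1}\ast V\subseteq U$. Then $a\ast V$ and $b\ast V$ are open neighbourhoods of $a$ and $b$. If $a\ast v_1=b\ast v_2$, then $v_1\ast v_2^{-1}=c$, and rearranging gives $v_1 v_2^{-1}\in V V^{-1}$. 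To match the set $V^{-1}\ast V$, I will choose $V$ with $VV^{-1}\subseteq U$ instead, using the continuity of $\Delta$ from Theorem~\ref{thm:criterion_xyinv}. This yields $c\in U$, a contradiction, so the two neighbourhoods are disjoint and $G_i$ is Hausdorff.

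The only real care point is getting the side of the product right in this last step, and using $\Delta(x,y)=x\ast y^{-1}$ settles it directly. Alternatively, one can simply cite the standard fact recorded just before the proposition, applied to $G_i$.
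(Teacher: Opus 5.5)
Your proposal is correct and follows the paper's route: the paper notes that $(\SE(F),(\tau_i)^\ast)$ is a topological group and cites the classical fact $T_0\Rightarrow T_2$. Your two-step argument ($T_0\Rightarrow T_1$ via the open set $b\ast V^{-1}$, then the disjoint translates $a\ast V$ and $b\ast V$) is a valid self-contained proof of that fact, but in the second step you should choose $V$ with $V\ast V^{-1}\subseteq \SE(F)\setminus\{a^{-1}\ast b\}$ from the outset, using continuity of $\Delta(x,y)=x\ast y^{-1}$ at $(e_F,e_F)$, and drop the $V^{-1}\ast V$ detour.
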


\begin{proof}
$(\SE(F),(\tau_i)^\ast)$ is a topological group. The classical theorem for topological groups yields $T_0\Rightarrow T_2$.
\end{proof}

\begin{remark}
Pairwise soft $T_0$ does not force $(\tau_1)^\ast$ and $(\tau_2)^\ast$ to be $T_0$ individually, because the separating open set may come from either topology. In contrast, pairwise soft $T_1$ implies that each topology separates points in the $T_0$ sense: for $a\neq_s b$ the definition provides a $\tau_1$-open separating $a$ from $b$ and a $\tau_2$-open separating $b$ from $a$.
\end{remark}

\section{Pairwise Soft Compactness and Connectedness}

Throughout this section $(F,\tau_1,\tau_2)$ denotes a soft bitopological \emph{group}.
Unless otherwise stated, $H\sse F$ is an arbitrary soft subset.

\subsection*{Pairwise soft compactness}

In this paper, pairwise compactness means compactness in the join topology $\alpha\vee\beta$. Since $\alpha\cup\beta$ is a subbase for the join, this is equivalent to requiring a finite subcover for every cover by members of $\alpha\cup\beta$. In the soft setting we use selector covers, because a parameterwise soft union need not represent the union of selector sets (Theorem~\ref{thm:selector_union_exact}).

\begin{definition}[Pairwise soft compactness]\label{def:pairwise_compact}
Let $(F,\tau_1,\tau_2)$ be a soft bitopological space and let $H\sse F$ have nonempty sections. The soft subset $H$ is \emph{pairwise soft compact} if $\SE(H)$ is compact as a subspace of
\begin{equation}
(\tau_1)^\ast\vee(\tau_2)^\ast.
\end{equation}
Equivalently, every cover of $\SE(H)$ by sets open in $(\tau_1)^\ast$ or in $(\tau_2)^\ast$ has a finite subcover. If $H=F$, we call $(F,\tau_1,\tau_2)$ pairwise soft compact.
\end{definition}

\begin{theorem}\label{thm:soft_to_component_compact}
Let $(F,\tau_1,\tau_2)$ be a soft bitopological space and let $H\sse F$ have nonempty sections. If $H$ is pairwise soft compact, then $H(t)$ is pairwise compact in $(F(t),(\tau_1)_t,(\tau_2)_t)$ for every $t\in A$.
\end{theorem}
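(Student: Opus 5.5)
The plan is to realize $H(t)$ as a continuous image of the compact space $\SE(H)$ under the coordinate projection $\pi_t$. The point to check is that $\pi_t$ stays continuous for the join topologies. Pairwise compactness of $H(t)$ in $(F(t),(\tau_1)_t,(\tau_2)_t)$ means compactness of $H(t)$ as a subspace of $(\tau_1)_t\vee(\tau_2)_t$, the convention fixed before Definition~\ref{def:pairwise_compact}.

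First I would show that $\pi_t:(\SE(F),(\tau_1)^\ast\vee(\tau_2)^\ast)\to(F(t),(\tau_1)_t\vee(\tau_2)_t)$ is continuous. It suffices to check subbasic open sets $U\in(\tau_i)_t$, $i\in\{1,2\}$. For such $U$, the preimage $\pi_t^{-1}(U)$ is the selector box with $U$ in coordinate $t$ and $F(s)$ in every other coordinate. Each $F(s)$ belongs to $(\tau_i)_s$, since $F\in\tau_i$, so this box is a basic open set of $(\tau_i)^\ast$ by Definition~\ref{def:box_tych}. It is therefore open in the join.

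Next I would restrict $\pi_t$ to the subspace $\SE(H)=\Box_{s\in A}H(s)$. Restriction keeps continuity, and the image is exactly $H(t)$. The inclusion of the image in $H(t)$ is clear. For the reverse inclusion, given $x\in H(t)$, choose some $c\in\SE(H)$; this is nonempty by the assumption that $H$ has nonempty sections, using choice as in Remark~\ref{rem:product_view}. Replacing $c(t)$ by $x$ gives a selector in $\SE(H)$ that projects to $x$. The continuous image of the compact space $\SE(H)$ is then compact in the join topology on $F(t)$, and as a subspace this is exactly $H(t)$ with the relative join topology.

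Alternatively, I could argue directly with covers. Given a cover of $H(t)$ by sets from $(\tau_1)_t\cup(\tau_2)_t$, pull each member back to a cylinder box. These cylinders cover $\SE(H)$, so a finite subfamily suffices, and projecting back gives a finite subcover of $H(t)$. I expect no real obstacle here. The only care needed is to confirm that cylinders are genuinely open in each $(\tau_i)^\ast$, which uses $F\in\tau_i$, and that surjectivity onto $H(t)$ holds, which uses the nonempty sections of $H$.
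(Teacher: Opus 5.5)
Your proposal is correct and is essentially the paper's proof: the paper also obtains $H(t)$ as the image of the compact space $\SE(H)$ under the coordinate projection $\pi_t$, which is continuous from $(\tau_1)^\ast\vee(\tau_2)^\ast$ to $(\tau_1)_t\vee(\tau_2)_t$. You also spell out two details the paper leaves implicit: the cylinders $\pi_t^{-1}(U)$ are basic boxes because $F\in\tau_i$, and the restriction of $\pi_t$ maps onto $H(t)$ because $H$ has nonempty sections.
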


\begin{proof}
For each $t$, the coordinate projection
\begin{equation}
\pi_t:\bigl(\SE(F),(\tau_1)^\ast\vee(\tau_2)^\ast\bigr)
\longrightarrow\bigl(F(t),(\tau_1)_t\vee(\tau_2)_t\bigr)
\end{equation}
is continuous. Its restriction maps $\SE(H)$ onto $H(t)$. Thus $H(t)$ is a continuous image of a compact space and is compact in the component join topology.
\end{proof}

\begin{theorem}\label{thm:finiteness_principle}
Assume that $A$ is finite, let $(F,\tau_1,\tau_2)$ be a soft bitopological space, and let $H\sse F$ have nonempty sections. If for every $t\in A$ the subset $H(t)$ is pairwise compact in $(F(t),(\tau_1)_t,(\tau_2)_t)$, then $H$ is pairwise soft compact in $(F,\tau_1,\tau_2)$.
\end{theorem}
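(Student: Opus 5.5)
The plan is to identify the join topology on $\SE(F)$ with the ordinary product of the component join topologies, and then apply Tychonoff's theorem for finitely many factors.

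\emph{Step 1: the join is a box topology.} We first show that $(\tau_1)^\ast\vee(\tau_2)^\ast$ on $\SE(F)=\prod_{t\in A}F(t)$ coincides with the box topology generated by the component joins $(\tau_1)_t\vee(\tau_2)_t$. This repeats the argument of Proposition~\ref{prop:box_join_omega} with $\tau^{-1}$ replaced by $\tau_2$. For one inclusion, a finite intersection of boxes from $(\tau_1)^\ast$ and $(\tau_2)^\ast$ is a box whose coordinates are finite intersections of $(\tau_1)_t$- and $(\tau_2)_t$-open sets, so it is open in the box join topology. For the other inclusion, take $x\in\prod_t W_t$ with $W_t\in(\tau_1)_t\vee(\tau_2)_t$. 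For each $t$, choose $U_t\in(\tau_1)_t$ and $V_t\in(\tau_2)_t$ with $x(t)\in U_t\cap V_t\subseteq W_t$. Then
\begin{equation}
\Bigl(\prod_tU_t\Bigr)\cap\Bigl(\prod_tV_t\Bigr)
\end{equation}
is join-open, contains $x$, and lies inside $\prod_tW_t$.

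A small point needs care here. A basic open set of a join topology is a finite intersection of opens $U\cap V$, but an element of $(\tau_1)_t\vee(\tau_2)_t$ is only a union of such sets. Picking one $U_t\cap V_t$ around $x(t)$ for each $t$ handles this. The step uses the axiom of choice when $A$ is infinite, but in the present theorem $A$ is finite anyway.

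\emph{Step 2: finite box equals product.} Since $A$ is finite, the box topology coincides with the Tychonoff product topology (Proposition~\ref{thm:induced_topology}, applied to the join topologies). So $\SE(F)$ with the join topology is the finite product space $\prod_{t\in A}\bigl(F(t),(\tau_1)_t\vee(\tau_2)_t\bigr)$.

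\emph{Step 3: restrict to $H$ and apply Tychonoff.} We have $\SE(H)=\prod_{t\in A}H(t)$. The subspace topology on a product of subsets of the factors equals the product of the subspace topologies. Each $H(t)$ is compact in the component join by hypothesis, since pairwise compactness means compactness in the join. The finite Tychonoff theorem, which follows from the tube lemma by induction, then shows that $\SE(H)$ is compact. This is exactly pairwise soft compactness of $H$ in the sense of Definition~\ref{def:pairwise_compact}.

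\emph{Main obstacle.} The only real point is Step 1, namely making sure the join of the two box topologies really is the box topology of the coordinatewise joins. Once that identification holds, Steps 2 and 3 are standard.
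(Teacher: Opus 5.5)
Your proof is correct and follows the paper's route: you identify $(\tau_1)^\ast\vee(\tau_2)^\ast$ with the finite product of the component joins $(\tau_1)_t\vee(\tau_2)_t$, then apply the finite Tychonoff theorem to $\SE(H)=\prod_{t\in A}H(t)$. The only difference is that you spell out two steps the paper merely asserts: the box-join identification (argued as in Proposition~\ref{prop:box_join_omega}) and the fact that the subspace topology on $\prod_t H(t)$ is the product of the subspace topologies.
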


\begin{proof}
For finite $A$, the box topology and the Tychonoff product topology coincide. Moreover,
\begin{equation}
(\tau_1)^\ast\vee(\tau_2)^\ast
=
\prod_{t\in A}\bigl((\tau_1)_t\vee(\tau_2)_t\bigr)
\end{equation}
as a finite product topology on $\prod_{t\in A}F(t)$. Since each $H(t)$ is compact in the component join, the finite product $\prod_{t\in A}H(t)=\SE(H)$ is compact. Thus $H$ is pairwise soft compact.
\end{proof}

\begin{proposition}\label{prop:infinite_A_compact_fail}
When $A$ is infinite, coordinatewise pairwise compactness need not imply pairwise soft compactness, even for canonical soft bitopologies and even for compact Hausdorff coordinate groups.
\end{proposition}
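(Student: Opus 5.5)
The proof is a counterexample, modelled on the infinite-parameter example in the proof of Proposition~\ref{prop:box_join_omega}. Take an infinite parameter set $A$ (for instance $A=\mathbb N$), the group $G=\mathbb Z/2\mathbb Z$, and the soft group $F(t)=G$ for every $t\in A$, so that $\SE(F)=\prod_{t\in A}\mathbb Z/2\mathbb Z$. Equip each coordinate with the discrete topology. Let $\tau_1=\tau_2=\tau$ be the canonical soft topology $\tau=\tau_{\mathrm{can}}$ whose component topologies are all discrete, that is, $\tau$ consists of all soft subsets of $F$.

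The first step is to check the hypotheses. The soft union of soft subsets is a soft subset, and likewise for finite intersections; $\Phi$ and $F$ belong to $\tau$. So $\tau$ is a soft topology, and it is canonical by construction. Each $(F(t),\tau_t,\tau_t)$ is a finite discrete group, hence a compact Hausdorff topological group, and its join topology is still discrete. So every $H(t)=F(t)$ is pairwise compact. By Theorem~\ref{thm:soft_bitop_char}(iii)$\Rightarrow$(i), $(F,\tau,\tau)$ is a soft bitopological group, so the example lies in the setting of this section.

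Next I would identify the induced join topology. Since $\tau^\ast\vee\tau^\ast=\tau^\ast$, it suffices to note that for any selector $a$ the singleton $\{a\}=\Box_{t\in A}\{a(t)\}$ is a basic box, because each $\{a(t)\}\in\tau_t$. Hence $\tau^\ast$ is the discrete topology on $\SE(F)$.

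Finally, $\SE(F)=\{0,1\}^A$ is infinite, indeed uncountable, because $A$ is infinite. An infinite discrete space is not compact: the cover by singletons, each of which is $\tau^\ast$-open, has no finite subcover. Thus $F$ is not pairwise soft compact in the sense of Definition~\ref{def:pairwise_compact}, even though every coordinate is pairwise compact. The only point needing care is that "canonical" and "soft bitopological group" are genuinely satisfied, and the checks above settle both. No real obstacle is expected; the construction reduces entirely to the observation that box topologies of discrete factors are discrete.
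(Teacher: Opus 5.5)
Your proposal is correct and follows the paper's own proof: the same counterexample $F(t)=\mathbb Z/2\mathbb Z$ with discrete coordinates and $\tau_1=\tau_2$ canonical, the observation that singletons are boxes so that $\tau^\ast$ is discrete, and the fact that an infinite discrete space is not compact. Your extra checks, that $\tau$ is a canonical soft topology and that Theorem~\ref{thm:soft_bitop_char} makes $(F,\tau,\tau)$ a soft bitopological group, are correct and make explicit what the paper leaves implicit.
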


\begin{proof}
Let $F(t)=\mathbb Z/2\mathbb Z$ for every $t\in A$ and give each coordinate the discrete topology; set $\tau_1=\tau_2$ to be the corresponding canonical soft topology. Each coordinate is a compact Hausdorff topological group. However, the induced selector topology on $\SE(F)=\prod_{t\in A}F(t)$ is the box topology. Since each singleton $\prod_{t\in A}\{x_t\}$ is a box, this topology is discrete. If $A$ is infinite, $\SE(F)$ is infinite, and an infinite discrete space is not compact. Hence $F$ is not pairwise soft compact.
\end{proof}

\begin{remark}\label{rem:tychonoff_soft_compact}
The same example is compact in the Tychonoff topology $\tau^\Pi$ by Tychonoff's theorem. Thus the finite/infinite split does not come from the coordinates; it comes from using the box topology in the soft-element construction.
\end{remark}

\begin{proposition}\label{prop:compact_closed}
Let $(F,\tau_1,\tau_2)$ be a soft bitopological group. Fix $i\in\{1,2\}$ and assume that $(\SE(F),(\tau_i)^\ast)$ is Hausdorff. If $H\sse F$ is a soft subgroup such that $\SE(H)$ is compact in $(\SE(F),(\tau_i)^\ast)$, then $\SE(H)$ is closed in $(\SE(F),(\tau_i)^\ast)$. This is equivalent to $H(t)$ being closed in $(F(t),(\tau_i)_t)$ for every $t\in A$.
\end{proposition}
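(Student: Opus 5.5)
The statement has two parts. The first is the closedness of $\SE(H)$. The second is the equivalence of that closedness with componentwise closedness of each $H(t)$. I would handle them separately.

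For the first part, I would use only that a compact subset of a Hausdorff space is closed. Here $(\SE(F),(\tau_i)^\ast)$ is Hausdorff by hypothesis and $\SE(H)$ is compact, so $\SE(H)$ is closed. The subgroup hypothesis is not needed for this step. It only ensures that $\SE(H)$ is a compact subgroup of the topological group $(\SE(F),(\tau_i)^\ast)$.

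For the equivalence, I would prove a general lemma: for any soft subset $H\sse F$ with nonempty sections, $\SE(H)=\prod_tH(t)$ is closed in the box topology $(\tau_i)^\ast$ if and only if every $H(t)$ is closed in $(F(t),(\tau_i)_t)$.

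For the direction assuming each $H(t)$ is closed, let $a\notin\SE(H)$. Then some coordinate satisfies $a(t_0)\notin H(t_0)$. Choose $U\in(\tau_i)_{t_0}$ with $a(t_0)\in U$ and $U\cap H(t_0)=\varnothing$. The box with $U$ at $t_0$ and $F(t)$ elsewhere is a basic open set of $(\tau_i)^\ast$ containing $a$ and missing $\SE(H)$. So the complement of $\SE(H)$ is open.

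For the converse, fix $t$ and the coordinate insertion $j_t$ from the proof of Theorem~\ref{thm:soft_bitop_char}, taking the base selector $c\in\SE(H)$. This choice is possible because the sections are nonempty. Then $j_t^{-1}(\SE(H))=H(t)$, since every other coordinate of $j_t(x)$ equals $c(s)\in H(s)$. The map $j_t$ is continuous, so $H(t)$ is closed.

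The only delicate point is choosing the base selector inside $\SE(H)$ rather than merely inside $\SE(F)$; otherwise the preimage under $j_t$ could be empty instead of $H(t)$. With that choice the equivalence holds for every soft subset with nonempty sections. Combining the lemma with the first part gives the statement: in the compact Hausdorff setting each $H(t)$ is closed. Equivalently, the statement follows from the lemma applied to the closed set $\SE(H)$.
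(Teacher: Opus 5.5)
Your proof is correct and follows the paper's argument. Compact subsets of a Hausdorff space are closed. Closedness of $\SE(H)=\prod_t H(t)$ in the box topology is then reduced to the components: coordinate insertions give one direction, and the cylinder decomposition $\SE(F)\setminus\SE(H)=\bigcup_{t}\pi_t^{-1}(F(t)\setminus H(t))$ gives the other. You choose the base selector explicitly inside $\SE(H)$ (for a soft subgroup, $e_F$ works), which is exactly the detail the paper leaves implicit.
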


\begin{proof}
In any Hausdorff topological space, compact subsets are closed. Hence $\SE(H)$ is closed. The rectangular subset $\SE(H)=\prod_t H(t)$ is closed in the box topology if and only if each $H(t)$ is closed in its component: one direction follows by taking coordinate insertions, and the other follows because
$\SE(F)\setminus\SE(H)=\bigcup_{t\in A}\pi_t^{-1}(F(t)\setminus H(t))$.
\end{proof}
\subsection*{Bi-soft connectedness}
Connectedness is most naturally expressed on the induced topological groups.

\begin{definition}\label{def:bi_connected}
A soft bitopological space $(F,\tau_1,\tau_2)$ is called \emph{bi-soft connected} if both induced spaces $(\SE(F),(\tau_1)^\ast)$ and $(\SE(F),(\tau_2)^\ast)$ are connected in the classical sense.
\end{definition}

\begin{proposition}\label{prop:connected_clopen}
If $(F,\tau_1,\tau_2)$ is bi-soft connected, then for each $i\in\{1,2\}$ the only clopen subsets of $(\SE(F),(\tau_i)^\ast)$ are $\varnothing$ and $\SE(F)$.
\end{proposition}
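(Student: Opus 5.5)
This is essentially a restatement of the classical definition of connectedness, so I will argue straight from Definition~\ref{def:bi_connected}. Fix $i\in\{1,2\}$. Bi-soft connectedness says in particular that $(\SE(F),(\tau_i)^\ast)$ is connected in the classical sense. I then recall the standard equivalent formulation: a topological space $Y$ is connected if and only if it cannot be written as a union of two disjoint nonempty open sets, and this holds if and only if $\varnothing$ and $Y$ are its only clopen subsets.

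The key step is the routine translation. Suppose $C\subseteq\SE(F)$ is clopen in $(\tau_i)^\ast$. Then its complement $\SE(F)\setminus C$ is also open. Hence $\SE(F)=C\cup(\SE(F)\setminus C)$ expresses $\SE(F)$ as a union of two disjoint open sets. By connectedness one of the two must be empty, which gives $C=\varnothing$ or $C=\SE(F)$. Since $i$ was arbitrary, the conclusion holds for both induced topologies.

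I do not expect a genuine obstacle here, because nothing specific to the box topology or to the group structure is needed. The only point worth a sentence is that $\SE(F)\neq\varnothing$, which follows from the standing assumption of nonempty sections together with Remark~\ref{rem:product_view}. This ensures the two listed clopen sets are genuinely the trivial ones. I might add a short remark noting that, by homogeneity (Proposition~\ref{prop:translations}), connectedness of $(\SE(F),(\tau_i)^\ast)$ is equivalent to the identity component being the whole group. This is not required for the proof.
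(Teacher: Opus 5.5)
Your proof is correct and takes the same approach as the paper: fix $i$ and apply the classical equivalence between connectedness and the absence of nontrivial clopen subsets to $(\SE(F),(\tau_i)^\ast)$, which your partition argument spells out. The remark about $\SE(F)\neq\varnothing$ is harmless but unnecessary, since the conclusion also holds trivially for the empty space.
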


\begin{proof}
A topological space is connected if and only if it has no non-trivial clopen subsets.
Apply this to each induced space $(\SE(F),(\tau_i)^\ast)$.
\end{proof}

\begin{proposition}\label{prop:connected_hom}
Let $(F,\tau_1,\tau_2)$ be a soft bitopological group and fix $i\in\{1,2\}$.
Assume that $(\SE(F),(\tau_i)^\ast)$ is connected.
If $D$ is a discrete topological group and
\begin{equation}
\varphi:(\SE(F),(\tau_i)^\ast)\to D
\end{equation}
is a continuous group homomorphism, then $\varphi$ is constant.
\end{proposition}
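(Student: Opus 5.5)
The plan is to reduce the statement to the classical fact that a connected space has no nontrivial clopen subsets, which is Proposition~\ref{prop:connected_clopen} applied to the single topology $(\tau_i)^\ast$. The group structure is used only mildly, and the homomorphism property is not needed for constancy. So the argument works for any continuous map into a discrete space.

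First, fix $i$ and write $X_i=(\SE(F),(\tau_i)^\ast)$. Since $D$ is discrete, every singleton $\{d\}\subseteq D$ is both open and closed. By continuity of $\varphi$, each fibre $\varphi^{-1}(\{d\})$ is then clopen in $X_i$.

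Second, consider the identity selector $e_F$ from Remark~\ref{rem:soft_identity}. Since $\varphi$ is a homomorphism, $\varphi(e_F)=e_D$, so the fibre $\varphi^{-1}(\{e_D\})=\ker\varphi$ is nonempty. Being clopen in the connected space $X_i$, it must be all of $\SE(F)$. Hence $\varphi(a)=e_D$ for every $a\in\SE(F)$, and $\varphi$ is constant; in fact it is the trivial homomorphism. As an alternative route, one could note that $\varphi(\SE(F))$ is a connected subset of a discrete space and therefore a single point.

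No step here is a real obstacle. The only point needing care is that connectedness is assumed for the one index $i$ in question, not bi-soft connectedness of both topologies. Because of that, I will argue directly with clopen sets in $X_i$ rather than citing Proposition~\ref{prop:connected_clopen} verbatim.
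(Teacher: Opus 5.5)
Your proof is correct and is essentially the paper's argument. The paper observes that $\varphi(\SE(F))$ is a connected subset of the discrete space $D$ and hence a single point, which is the alternative you mention; your clopen-kernel version unpacks the same fact, with the small bonus of identifying the constant value as $e_D$ (so $\varphi$ is trivial), and you are right that only connectedness of $(\tau_i)^\ast$, not bi-soft connectedness, is needed.
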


\begin{proof}
The image of a connected space under a continuous map is connected.
Every connected subset of a discrete space is a singleton, hence $\varphi(\SE(F))$ is a singleton.
\end{proof}

\begin{remark}\label{rem:identity_component}
For each $i\in\{1,2\}$, the identity component of the topological group $(\SE(F),(\tau_i)^\ast)$ is a closed normal subgroup.
Thus bi-soft connectedness is equivalent to saying that the identity component is the whole group for both induced topologies.
\end{remark}
\section{Homomorphisms of Soft Bitopological Groups}
\begin{definition}[Soft bitopological group homomorphism]\label{def:hom}
Let $(F,\tau_1,\tau_2)$ and $(H,\sigma_1,\sigma_2)$ be soft bitopological groups (over groups $G$ and $G'$). A map
\begin{equation}
\varphi:\SE(F)\to \SE(H)
\end{equation}
is called a \emph{soft bitopological group homomorphism} if:
\begin{romanlist}
\item $\varphi$ is a group homomorphism $(\SE(F),\ast)\to(\SE(H),\ast)$;
\item for each $i\in\{1,2\}$, $\varphi:(\SE(F),(\tau_i)^\ast)\to(\SE(H),(\sigma_i)^\ast)$ is continuous.
\end{romanlist}
If $\varphi$ is bijective and its inverse also satisfies (i)--(ii), then $\varphi$ is a \emph{soft bitopological isomorphism}.
\end{definition}

\begin{remark}
In many soft applications, one restricts to homomorphisms that act \emph{parameterwise}: for each $t\in A$ there is a homomorphism $\varphi_t:F(t)\to H(t)$ such that
\begin{equation}
(\varphi(a))(t)=\varphi_t(a(t))\qquad (a\in \SE(F)).
\end{equation}
Not every abstract homomorphism $\SE(F)\to \SE(H)$ decomposes this way; therefore the choice of the morphism notion should be stated explicitly in applications.
\end{remark}

\begin{proposition}\label{prop:kernel_image}
Let $\varphi:(F,\tau_1,\tau_2)\to(H,\sigma_1,\sigma_2)$ be a soft bitopological group homomorphism.
Then $\ker(\varphi)$ is a normal subgroup of $\SE(F)$ and $\mathrm{im}(\varphi)$ is a subgroup of $\SE(H)$. If $\varphi$ is surjective and $(F,\tau_1,\tau_2)$ is pairwise soft compact (resp.\ bi-soft connected), then $(H,\sigma_1,\sigma_2)$ is pairwise soft compact (resp.\ bi-soft connected).
\end{proposition}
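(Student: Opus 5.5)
The algebraic part is standard group theory applied to the homomorphism $\varphi:(\SE(F),\ast)\to(\SE(H),\ast)$. If $\varphi(a)=\varphi(b)=e_H$, then $\varphi(a\ast b^{-1})=e_H$, so $\ker(\varphi)$ is a subgroup. For $c\in\SE(F)$ one has $\varphi(c\ast a\ast c^{-1})=\varphi(c)\varphi(c)^{-1}=e_H$, so the kernel is normal. Here $e_H$ denotes the identity selector of $\SE(H)$ from Remark~\ref{rem:soft_identity}. For the image, $\varphi(a)\ast\varphi(b)^{-1}=\varphi(a\ast b^{-1})$, so it is a subgroup. None of this uses the topologies.

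For compactness, I would first show that $\varphi$ is continuous from the join $(\tau_1)^\ast\vee(\tau_2)^\ast$ to the join $(\sigma_1)^\ast\vee(\sigma_2)^\ast$. Since $(\sigma_1)^\ast\cup(\sigma_2)^\ast$ is a subbase for the target join, it suffices to check subbasic sets. If $W\in(\sigma_i)^\ast$, condition (ii) of Definition~\ref{def:hom} gives $\varphi^{-1}(W)\in(\tau_i)^\ast\subseteq(\tau_1)^\ast\vee(\tau_2)^\ast$. Pairwise soft compactness of $F$ means $\SE(F)$ is compact in its join, by Definition~\ref{def:pairwise_compact} with the soft subset equal to $F$. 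Surjectivity then gives that $\SE(H)=\varphi(\SE(F))$ is a continuous image of a compact space, hence compact in the target join. That is exactly pairwise soft compactness of $H$.

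For connectedness, I would fix $i$ and use that $\varphi:(\SE(F),(\tau_i)^\ast)\to(\SE(H),(\sigma_i)^\ast)$ is continuous and surjective. Connectedness of $(\SE(F),(\tau_i)^\ast)$ (Definition~\ref{def:bi_connected}) then passes to $\SE(H)$ as a continuous image. Doing this for $i=1,2$ gives bi-soft connectedness of $H$.

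There is no real obstacle. The only point needing care is the compactness transfer: I must verify that continuity into each $(\sigma_i)^\ast$ separately yields continuity between the join topologies. The subbase argument above handles this, and it does not rely on any box-structure of the topologies.
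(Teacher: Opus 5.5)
Your proof is correct and follows the paper's argument: the subgroup facts are classical, continuity between the join topologies is checked on the subbase $(\sigma_1)^\ast\cup(\sigma_2)^\ast$, and both compactness and connectedness pass to the target because $\varphi$ is continuous and onto. You spell out the subbase step and the normality computation more explicitly than the paper does, but the route is the same.
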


\begin{proof}
The subgroup statements are classical. For pairwise compactness, the inverse image of every $(\sigma_1)^\ast$-open or $(\sigma_2)^\ast$-open set is open in the corresponding domain topology. Hence $\varphi$ is continuous from $(\tau_1)^\ast\vee(\tau_2)^\ast$ to $(\sigma_1)^\ast\vee(\sigma_2)^\ast$, and compactness passes to the surjective image. For bi-soft connectedness, each map
\begin{equation}
\varphi:(\SE(F),(\tau_i)^\ast)\longrightarrow(\SE(H),(\sigma_i)^\ast)
\end{equation}
is continuous and onto, so each target space is connected.
\end{proof}

\section{Examples}

\begin{example}\label{ex:disc_indisc}
Let $G$ be any group and let $F(t)=G$ for all $t\in A$. Then $F$ is a soft group and $\SE(F)=G^A$. 
Let $\tau_{\mathrm{disc}}$ be the soft discrete topology (all soft subsets are open) and $\tau_{\mathrm{ind}}=\{\Phi,F\}$ the soft indiscrete topology. 
Then $(F,\tau_{\mathrm{disc}},\tau_{\mathrm{ind}})$ is a soft bitopological group. The first induced topology is discrete and Hausdorff, while the second is indiscrete and not even $T_0$.
\end{example}

\begin{example}\label{ex:D4_example}
Let $G=D_{8}=\langle r,s\mid r^{4}=e,\ s^{2}=e,\ srs=r^{-1}\rangle$ and let $A=\{t_1,t_2\}$. Define
\begin{equation}
F(t_1)=\langle r\rangle,\qquad F(t_2)=D_8.
\end{equation}
Let $F_1,F_2\sse F$ be given by
\begin{equation}
F_1(t_1)=\langle r^2\rangle,\qquad F_1(t_2)=\langle r\rangle,
\end{equation}
\begin{equation}
F_2(t_1)=r\langle r^2\rangle,\qquad F_2(t_2)=s\langle r\rangle.
\end{equation}
Then
\begin{equation}
\tau_1=\{\Phi,F,F_1,F_2\}
\end{equation}
is a soft topology because $F_1\Sunion F_2=F$ and $F_1\Sinter F_2=\Phi$. Its component topologies are
\begin{equation}
(\tau_1)_{t_1}=\{\varnothing,\langle r\rangle,\langle r^2\rangle,r\langle r^2\rangle\},
\end{equation}
\begin{equation}
(\tau_1)_{t_2}=\{\varnothing,D_8,\langle r\rangle,s\langle r\rangle\}.
\end{equation}
Each is the coset topology of an index-two subgroup and is a group topology. Let $\widehat\tau_1$ be the canonical enlargement determined by these two component topologies. Then $\tau_1\subsetneq\widehat\tau_1$; for example, the componentwise choice $\langle r^2\rangle$ at $t_1$ and $s\langle r\rangle$ at $t_2$ belongs to $\widehat\tau_1$ but not to $\tau_1$. Nevertheless,
\begin{equation}
(\tau_1)^\ast=(\widehat\tau_1)^\ast.
\end{equation}
The selector group is therefore a topological group in this common box topology. Under Definition~\ref{def:soft_top_group}, both $(F,\tau_1)$ and $(F,\widehat\tau_1)$ are soft topological groups, although the selector construction cannot distinguish their different parameter correlations.

Let $\tau_2$ be the soft discrete topology on $F$. Then $(F,\tau_1,\tau_2)$ is a soft bitopological group.
\end{example}

\begin{example}\label{ex:padic_qadic}
Let $G=(\mathbb Z,+)$ and let $p\neq q$ be primes. Let $\tau_p$ and $\tau_q$ denote the $p$-adic and $q$-adic group topologies on $\mathbb Z$, whose neighbourhood bases at $0$ are $\{p^n\mathbb Z:n\in\mathbb N\}$ and $\{q^n\mathbb Z:n\in\mathbb N\}$. Both are Hausdorff group topologies. They are incomparable: if $\tau_p\subseteq\tau_q$, then $p\mathbb Z$ is $\tau_q$-open, so some basic $\tau_q$-neighbourhood $q^n\mathbb Z$ is contained in $p\mathbb Z$, forcing $p\mid q^n$, which is impossible; symmetrically $\tau_q\not\subseteq\tau_p$.

Let $A=\{t_1,t_2\}$ and define $F(t_1)=F(t_2)=\mathbb Z$. Let $\tau_1$ be the canonical soft topology with component topologies
\begin{equation}
(\tau_1)_{t_1}=\tau_p,\qquad (\tau_1)_{t_2}=\tau_q,
\end{equation}
and let $\tau_2$ be the canonical soft topology with the two assignments reversed. Then $(F,\tau_1,\tau_2)$ is a soft bitopological group. In this example both induced topologies on $\SE(F)\cong \mathbb Z\times \mathbb Z$ are Hausdorff and different; the difference comes from assigning the incomparable group topologies to the two coordinates in opposite order.
\end{example}

\begin{example}\label{ex:sorgenfrey_soft}
Let $G=(\mathbb R,+)$ and let $\mathbb S$ denote the lower-limit topology on $\mathbb R$, generated by intervals $[a,b)$. Addition is jointly continuous in $(\mathbb R,\mathbb S)$, so this is a paratopological group, but inversion $x\mapsto -x$ is not continuous because the inverse image of $[0,1)$ is $(-1,0]$, which is not lower-limit open. The conjugate topology $\mathbb S^{-1}$ is the upper-limit topology generated by intervals $(a,b]$.

Let $F(t)=\mathbb R$ for every $t\in A$ and let $\tau$ be the canonical soft topology with $\tau_t=\mathbb S$ for all $t$. Then $(F,\tau)$ is a soft paratopological group, and $\tau^{-1}$ is the canonical soft topology with upper-limit components. If $A$ is nonempty, $\tau^\ast\neq(\tau^{-1})^\ast$; hence $(F,\tau)$ is not a soft topological group. The pair $(\tau,\tau^{-1})$ shows the failure of inversion continuity.
\end{example}

\section{Conclusion}
The selector construction equips $\SE(F)=\prod_{t\in A}F(t)$ with the box topology determined by the component topologies. It gives the usual group-theoretic transfer results, but it also has a clear limitation: it may lose correlations contained in a noncanonical soft topology. For a soft paratopological group, the inverse topology forms a conjugate pair with the original topology, and equality of the two selector topologies is equivalent to continuity of inversion. The box topology also explains why compactness and $\omega$-boundedness behave differently for finite and infinite parameter sets. Finally, selectors preserve soft intersections exactly, while soft unions may create mixed selectors.

\end{document}